\newcommand{\lra}{\longrightarrow}
\newcommand{\ra}{\rightarrow}
\newcommand{\Hom}{\operatorname{Hom}}
\newcommand{\Cliff}{\operatorname{Cliff}}
\newcommand{\gr}{\operatorname{gr}}
\newcommand{\Coker}{\operatorname{Coker}}
\newcommand{\Ker}{\operatorname{Ker}}
\theoremstyle{plain}
\newtheorem{theorem}{Theorem}[section]
\newtheorem{lem}[theorem]{Lemma}
\newtheorem{prop}[theorem]{Proposition}
\newtheorem{cor}[theorem]{Corollary}
\newtheorem{rem}[theorem]{Remark}
\newtheorem{ex}[theorem]{Example}
\numberwithin{equation}{section}
\begin{document}
\title[Rank-2 Brill-Noether theory]{Some examples of rank-2 Brill-Noether loci}

\author{P. E. Newstead}

\address{P.E. Newstead\\Department of Mathematical Sciences\\
              University of Liverpool\\
              Peach Street, Liverpool L69 7ZL, UK}
\email{newstead@liv.ac.uk}

\date{\today}
\thanks{}
\keywords{Algebraic curve, semistable vector bundle, Clifford index, Brill-Noether theory}
\subjclass[2010]{Primary: 14H60}

\begin{abstract}
In this paper, we construct some examples of rank-2 Brill-Noether loci with ``unexpected'' properties on general curves. The key examples are in genus 6, but we also have interesting examples in genus 5 and in higher genus. We relate some of our results to the recent proof of Mercat's conjecture in rank 2 by Bakker and Farkas. 
\end{abstract}
\maketitle

\section{Introduction}\label{sec-intro}
Let $C$ be a general curve of genus $g$ defined over the complex numbers. The main focus of this paper is to study certain rank-2 Brill-Noether loci in the case $g=6$ and, in particular, to show that $B(2,10,4)$ is reducible (see below for the definitions); this is contrary to na\"ive expectations. We consider also similar situations in genus $5$ and in higher genus and finish with some results on bundles computing the rank-2 Clifford index for low values of $g$. These examples are presented as a contribution to higher rank Brill-Noether theory, which is still far from fully understood even in rank $2$.

We denote by $M(n,d)$ (respectively, $\widetilde{M}(n,d)$) the moduli space of stable bundles (respectively, S-equivalence classes of semistable bundles) of rank $n$ and degree $d$ on $C$ and define
\begin{gather*}
B(n,d,k):=\{E\in M(n,d)|h^0(E)\ge k\}\\
\widetilde{B}(n,d,k):=\{[E]\in \widetilde{M}(n,d)|h^0(\gr(E))\ge k\}.
\end{gather*}
(Here $[E]$ denotes the S-equivalence class of a semistable bundle and $\gr(E)$ denotes the graded object associated with $E$.) We write also $K_C$ for the canonical bundle of $C$ and $B(2,K_C,k)$ ($\widetilde{B}(2,K_C,k)$) for the subvariety of $B(2, 2g-2,k)$ ($\widetilde{B}(2,2g-2,k)$) given by bundles of determinant $K_C$. Our first main result is

\ 

\noindent{\bf Theorem \ref{t-g6k4}.}
\emph{Let $C$ be a general curve of genus $g=\lambda(2\lambda-1)$ for $\lambda\in\mathbb{Z}$, $\lambda\ge2$. Then $B(2,K_C,2\lambda)$ has pure dimension $4\lambda(\lambda-1)-3$ and is smooth outside the non-empty locus $B(2,K_C,2\lambda+1)$. Moreover $B(2,2g-2,2\lambda)$ has at least one irreducible component of dimension $4\lambda(\lambda-1)-3$ which is not contained in $B(2,K_C,2\lambda)$.}

\

This is of particular significance in the case $\lambda=2$ or equivalently $g=6$, which is the first value of the genus for which the expected dimension of $B(2,2g-2,k)$ can be negative while that of $B(2,K_C,k)$ is non-negative. The appropriate value of $k$ in this case is $k=5$ and we prove

\

\noindent{\bf Theorem \ref{p-g6k5}.}
\emph{Let $C$ be a general curve of genus $6$. Then
$\widetilde{B}(2,10,k)=\emptyset$ for $k\ge6$. Moreover
$B(2,10,5)=\widetilde{B}(2,10,5)=B(2,K_C,5)$ consists of a single point $E_{2,10,5}$ and $E_{2,10,5}$ is generated.}

\

We show further that $B(3,10,5)$ consists of a single point (Proposition \ref{p-r3}). Also in Section \ref{sec-g6}, we relate our results for genus $6$ to others in the literature and interpret them in terms of coherent systems. 

In Section \ref{sec-g5}, we consider a somewhat analogous problem for $g=5$. Finally, in Section \ref{sec-cliff}, we obtain some results on bundles computing rank-2 Clifford indices for low values of $g$ which extend those of \cite{ln3} and relate them to the recent result of Bakker and Farkas \cite{bfa} confirming Mercat's conjecture in rank $2$ for general curves.

My thanks are due to the referee(s) for some useful suggestions.

\section{Background and preliminaries}\label{sec-pre}
Throughout the paper, $C$ will be a smooth curve of genus $g\ge5$ defined over the complex numbers. For any vector bundle $E$ on $C$, we write $n_E$ for the rank of $E$ and $d_E$ for the degree of $E$. We define 
\[\Cliff(E):=\frac1{n_E}\left(d_E-2(h^0(E)-n_E)\right)\]
and
\begin{eqnarray*}
\Cliff_n(C):&=&\min\big\{\Cliff(E)|E \mbox{ semistable},\\ 
&& n_E=n, h^0(E)\ge2n_E, d_E\le n_E(g-1)\big\}.
\end{eqnarray*}
With this notation, $\Cliff_1(C)$ is the classical Clifford index $\Cliff(C)$. We recall that, for $C$ a general curve of genus $g$, $\Cliff(C)=\lfloor\frac{g-1}2\rfloor$ and the gonality of $C$ (the minimal degree of a line bundle with $h^0\ge2$) is $\operatorname{gon}(C)=\lfloor\frac{g-1}2\rfloor+2$. It is clear that $\Cliff_n(C)\le \Cliff(C)$ for all $n$, and Mercat \cite{m} conjectured that $\Cliff_n(C)=\Cliff(C)$ (actually Mercat's conjecture is a little stronger than this (see \cite[Proposition 3.3]{ln}), but equivalent to it in rank $2$). There are many counter-examples to this conjecture, but recently Bakker and Farkas \cite{bfa} have proved that, for $C$ a general curve of genus $g$,
\begin{equation}\label{eqc2}
\Cliff_2(C)=\Cliff(C).
\end{equation}

The Brill-Noether locus $B(n,d,k)$ has an ``expected'' dimension
\[\beta(n,d,k):=n^2(g-1)+1-k(k-d+n(g-1)).\]
Provided $d<n(g-1)+k$, every irreducible component of $B(n,d,k)$ has dimension $\ge\beta(n,d,k)$. The infinitesimal behaviour of $B(n,d,k)$ is governed in part by the multiplication map (often referred to as the Petri map)
\[H^0(E)\otimes H^0(E^*\otimes K_C)\lra H^0(E\otimes E^*\otimes K_C).\]
In fact, $B(n,d,k)$ is smooth of dimension $\beta(n,d,k)$ at a point $E$ if and only if the Petri map is injective. For $n=1$, one can define a {\em Petri curve} to be a curve for which
\[H^0(L)\otimes H^0(L^*\otimes K_C)\lra H^0(K_C)\]
is injective for all line bundles $L$. The general curve of any genus is a Petri curve and, if $C$ is Petri and $d<g-1+k$, $B(1,d,k)$ is empty if $\beta(1,d,k)<0$, of dimension $\beta(1,d,k)$ if $\beta(1,d,k)\ge0$ and irreducible if $\beta(1,d,k)>0$. Moreover, if $\beta(1,d,k)\ge0$, the singular set of $B(1,d,k)$ is $B(1,d,k+1)$. (For these and other results in classical Brill-Noether theory, see \cite{acgh}.) There is no analogue of these results for higher rank.

For $B(2,K_C,k)$, the expected dimension is not $\beta(2,2g-2,k)-g$, but instead it is
\[\beta(2,K_C,k)=3g-3-\frac{k(k+1)}2.\]
There is also a different Petri map (obtained by symmetrizing the usual Petri map with respect to the natural isomorphism $E\simeq E^*\otimes K_C$)
\[\operatorname{Sym}^2(H^0(E))\lra H^0(\operatorname{Sym}^2(E)).\]
One can then prove that, on a general curve, this Petri map is always injective for stable $E$ and hence $B(2,K_C,k)$ is smooth at any point $E$ for which $h^0(E)=k$ (see \cite{te2}). There are also partial results on non-emptiness for $B(2,K_C,k)$ for all $g$ \cite{te1} (see also \cite{lnp,zh}) and complete results for small values of $g$ \cite{bf}. Some detailed results for $k\le3$ can be found in \cite[section 7]{cf} and for $k=4$ in \cite{cra}.

By a {\em subpencil} of a bundle $E$, we mean a rank-$1$ subsheaf $L$ of $E$ such that $h^0(L)=2$. The following lemmas will be useful.
\begin{lem}\label{l-p1}
Let $E$ be a bundle of rank $2$ on $C$ such that $h^0(E)=s+2$, $s\ge1$. If $E$ does not admit a subpencil, then $h^0(\det E)\ge 2s+1$.
\end{lem}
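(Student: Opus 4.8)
The plan is to exploit the natural wedge map
\[\phi\colon\ \wedge^2 H^0(E)\ \lra\ H^0(\wedge^2E)=H^0(\det E),\qquad v\wedge w\ \longmapsto\ \big(p\mapsto v(p)\wedge w(p)\big),\]
and to deduce the bound from $h^0(\det E)\ge\dim\IM\phi=\binom{s+2}{2}-\dim\Ker\phi$. The essential step is to show that the hypothesis translates into the statement that \emph{$\Ker\phi$ contains no nonzero decomposable element of $\wedge^2H^0(E)$}. Indeed, a nonzero decomposable element has the form $v\wedge w$ with $v,w\in H^0(E)$ linearly independent; if $\phi(v\wedge w)=0$ then $v(p)$ and $w(p)$ are proportional in $E_p$ for every $p$, so the subsheaf $L=\langle v,w\rangle\subseteq E$ has rank $1$ and $h^0(L)\ge2$. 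Passing to the saturation $\bar L\subseteq E$ (a line subbundle) gives $h^0(\bar L)\ge2$, and if $h^0(\bar L)=t\ge3$ then, for a general effective divisor $D$ of degree $t-2$, the rank-$1$ subsheaf $\bar L(-D)\subseteq E$ has $h^0=2$. In every case $E$ acquires a subpencil, contrary to hypothesis.

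It then suffices to bound $\dim\Ker\phi$. In $\PP\big(\wedge^2H^0(E)\big)=\PP^{\binom{s+2}{2}-1}$ the decomposables form the Grassmannian $G(2,H^0(E))$, irreducible of dimension $2s$; by the previous step $\Ker\phi$ meets the cone of decomposables only at $0$, so $\PP(\Ker\phi)$ is disjoint from $G(2,H^0(E))$. Since two projective varieties $Y,Z\subseteq\PP^n$ with $\dim Y+\dim Z\ge n$ necessarily meet, we get $\dim\PP(\Ker\phi)+2s<\binom{s+2}{2}-1$, hence $\dim\Ker\phi\le\binom{s+2}{2}-2s-1=\binom{s}{2}$. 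Therefore
\[h^0(\det E)\ge\dim\IM\phi=\binom{s+2}{2}-\dim\Ker\phi\ge\binom{s+2}{2}-\binom{s}{2}=2s+1,\]
as asserted.

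The one genuinely substantive point is the first step, i.e.\ the observation that ``$E$ has no subpencil'' says exactly that $\Ker\phi$ meets the Grassmannian of decomposables only in $0$; once that is in hand the bound is forced by an elementary dimension count (and is sharp for precisely this numerical reason). The sole technical irritation is the base-point removal producing a subpencil from a rank-$1$ subsheaf with $h^0\ge3$, which is routine. It is worth noting that for the small values of the genus of real interest later no general theorem is needed: for $s=1$ every bivector in $\wedge^2H^0(E)$ is already decomposable, so $\phi$ is injective and $h^0(\det E)\ge\binom32=3$; for $s=2$ the relevant Grassmannian is the Pl\"ucker quadric in $\PP^5$, which every line meets, so $\dim\Ker\phi\le1$ and $h^0(\det E)\ge5$.
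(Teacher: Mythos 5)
Your argument is correct. For comparison: the paper does not actually prove this lemma --- it disposes of it in one line by citing the rank-2 case of Paranjape--Ramanan \cite[Lemma 3.9]{pr} --- so your self-contained proof is strictly more informative, and it is in essence the standard proof of that cited result. Both substantive steps check out. First, a nonzero decomposable $v\wedge w\in\Ker\phi$ does generate a rank-1 subsheaf $L\subseteq E$ with $h^0(L)\ge2$, and your twisting-down by a general effective divisor is needed (and suffices) because the paper's definition of subpencil demands $h^0$ \emph{equal} to $2$, not merely $\ge2$; subtracting a general point drops $h^0$ by exactly one as long as $h^0\ge1$, so this is fine. Second, the projective dimension theorem applied to the linear space $\PP(\Ker\phi)$ and the irreducible $2s$-dimensional Grassmannian $G(2,H^0(E))\subseteq\PP^{\binom{s+2}{2}-1}$ gives $\dim\Ker\phi\le\binom{s+2}{2}-2s-1=\binom{s}{2}$, hence $h^0(\det E)\ge\dim\IM\phi\ge 2s+1$; the degenerate case $\Ker\phi=0$ (empty projectivization) only makes the bound better. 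Your closing observation that for $s=1$ the map $\phi$ is automatically injective and for $s=2$ the Pl\"ucker quadric meets every line is a nice sanity check for the low-genus applications in the paper.
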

\begin{proof}
This is the rank-2 case of \cite[Lemma 3.9]{pr}.
\end{proof}

\begin{cor}\label{c-p1}
Let $C$ be a general curve of genus $g\ge6$ and $E$ a semistable bundle with $d_E=2g-2$ which computes $\Cliff_2(C)$. If $g=9$, suppose in addition that $E$ is stable. If either $g$ is even or $\det E\not\simeq K_C$, then $E$ is expressible in the form
\begin{equation}\label{eq-p1}
0\lra L\lra E\lra L'^*\otimes K_C\lra0,
\end{equation}
where $d_L=d_{L'}=\left\lfloor\frac{g-1}2\right\rfloor+2$ and $h^0(L)=h^0(L')=2$. Moreover, all sections of $L'^*\otimes K_C$ lift to $E$.
\end{cor}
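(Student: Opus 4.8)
The plan is to read off $h^0(E)$ from the hypothesis, produce a subpencil via Lemma~\ref{l-p1}, and then determine the relevant degrees and numbers of sections by combining the Clifford-index inequality for line bundles on a general curve with semistability, Brill--Noether theory, and (only in genus $9$) stability. Since $E$ computes $\Cliff_2(C)$, \eqref{eqc2} gives $\Cliff(E)=\Cliff(C)=\left\lfloor\frac{g-1}{2}\right\rfloor$; writing $\gamma:=\Cliff(C)$ and $s:=\left\lfloor\frac{g}{2}\right\rfloor$, so that $\gamma+s=g-1$ and $s\ge3$, the definition of $\Cliff(E)$ forces $h^0(E)=g+1-\gamma=s+2$. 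I then claim $E$ admits a subpencil. If not, Lemma~\ref{l-p1} gives $h^0(\det E)\ge2s+1$. But $\deg\det E=2g-2$, and under the hypothesis $h^0(\det E)\le2s$: if $\det E\not\simeq K_C$ then $K_C\otimes(\det E)^{-1}$ is a nontrivial line bundle of degree $0$, so $h^1(\det E)=0$ and hence $h^0(\det E)=g-1=\gamma+s\le 2s$ by Riemann--Roch; while if $g$ is even and $\det E\simeq K_C$ then $h^0(\det E)=g=2s$. Either way this contradicts Lemma~\ref{l-p1}, and one sees this is precisely where the hypothesis is needed: for $g$ odd with $\det E\simeq K_C$ one has $h^0(K_C)=g=2s+1$ and no contradiction arises.

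Next, fix a subpencil of $E$, let $L\subset E$ be its saturation --- a line subbundle with $h^0(L)\ge2$ --- and set $M:=E/L$, a line bundle with $d_L+d_M=2g-2$. Semistability gives $d_L\le g-1$, whence $h^1(L)=h^0(L)-d_L+g-1\ge h^0(L)\ge2$, so $\Cliff(L)\ge\Cliff(C)=\gamma$ because $C$ is general; equivalently $d_L\ge\gamma+2h^0(L)-2$. Substituting this into $d_M=2g-2-d_L$ and using $h^0(M)\ge h^0(E)-h^0(L)$, a short computation yields $h^1(M)\ge h^0(L)\ge2$ (and $h^0(M)\ge2$), so likewise $\Cliff(M)\ge\gamma$. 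Adding the two Clifford inequalities and using $d_L+d_M=2(\gamma+s)$ together with $h^0(L)+h^0(M)\ge h^0(E)=s+2$ forces equality throughout: $\Cliff(L)=\Cliff(M)=\gamma$ and $h^0(L)+h^0(M)=h^0(E)$. The latter says $H^0(E)\lra H^0(M)$ is surjective, i.e.\ all sections of $M$ lift to $E$; and writing $h^0(L)=r_L+1$, $h^0(M)=r_M+1$ one gets $d_L=\gamma+2r_L$, $d_M=\gamma+2r_M$, with $r_L+r_M=s$ and $1\le r_L\le\left\lfloor\frac{s}{2}\right\rfloor$ (the upper bound because $d_L\le g-1$).

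It remains to prove $r_L=1$; this is the heart of the matter, and the step I expect to be the main obstacle. Suppose $r_L\ge2$. Then $L$ defines a point of $B(1,d_L,h^0(L))$ with $d_L=\gamma+2r_L\le g-1<g-1+h^0(L)$, and $\beta(1,d_L,h^0(L))=g-(r_L+1)(s+1-r_L)$; a direct check (case by case in $r_L$ and $g$) shows this is $<0$ for all $r_L$ with $2\le r_L\le\left\lfloor\frac{s}{2}\right\rfloor$, the sole exception being $g=9$, $r_L=2$, where it equals $0$ and $d_L=g-1$. Since a general curve is Petri, $B(1,d_L,h^0(L))=\emptyset$ whenever $\beta<0$, a contradiction; and in the exceptional case $d_L=g-1=\tfrac12 d_E$, so $L$ destabilizes $E$, contradicting the hypothesis that $E$ is stable when $g=9$. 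Hence $r_L=1$, so $h^0(L)=2$ and $d_L=\gamma+2=\left\lfloor\frac{g-1}{2}\right\rfloor+2$, while $h^0(M)=s$ and $d_M=\gamma+2s-2$. Finally set $L':=M^*\otimes K_C$; then $d_{L'}=2g-2-d_M=\gamma+2=\left\lfloor\frac{g-1}{2}\right\rfloor+2$ and $h^0(L')=h^1(M)=2$, and $M=L'^*\otimes K_C$, so the sequence $0\lra L\lra E\lra L'^*\otimes K_C\lra0$ has all the asserted properties. The genus-$9$ exception above is genuine, which is why the extra stability hypothesis appears there.
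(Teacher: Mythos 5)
Your proposal is correct, and it proves more than the paper chooses to: the opening step (computing $h^0(E)=s+2$ and producing a subpencil by playing Lemma~\ref{l-p1} against the bound $h^0(\det E)\le g$, with the parity/determinant hypothesis entering exactly where you say it does) coincides with the paper's argument, but for the passage from ``$E$ has a subpencil'' to the precise shape of \eqref{eq-p1} the paper simply observes that for $g=6$ there is no other numerical possibility and for $g\ge7$ cites Proposition~7.2 and Theorem~7.4 of \cite{ln3}, whereas you reprove that step from scratch. Your route --- saturate the subpencil, apply the line-bundle Clifford inequality to both $L$ and $M=E/L$, force equality by summing, and then kill $r_L\ge2$ by the Petri-curve emptiness of $B(1,\gamma+2r_L,r_L+1)$ when $\beta<0$ --- is sound (I checked the concavity argument showing $(r_L+1)(s+1-r_L)>g$ on $2\le r_L\le\lfloor s/2\rfloor$ and the lone exception $g=9$, $r_L=2$, $d_L=g-1$, which is exactly the strictly semistable case excluded by the extra hypothesis). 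The lifting of sections falls out of your equality $h^0(L)+h^0(M)=h^0(E)$ just as in the paper's Riemann--Roch count. What the paper's citation buys is brevity; what your version buys is a self-contained proof that also makes transparent why the genus-$9$ stability hypothesis is genuinely needed, which the paper's proof leaves implicit in the reference.
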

\begin{proof} Suppose first that $g=2s$, so that $\Cliff_2(C)=\Cliff(C)=s-1$ and $h^0(E)=s+2$. If $E$ does not admit a subpencil, then, by Lemma \ref{l-p1}, $h^0(\det E)\ge 2s+1=g+1$, a contradiction. Now suppose that $g=2s+1$, so that $\Cliff_2(C)=s$ and again $h^0(E)=s+2$. Now, by Lemma \ref{l-p1}, $h^0(\det E)\ge 2s+1=g$. Since $\det E\not\simeq K_C$, this is again a contradiction. So $E$ admits a subpencil.

For $g=6$, the only possibility is given by \eqref{eq-p1}. For $g\ge7$, the existence of \eqref{eq-p1} follows from \cite[Proposition 7.2 and Theorem 7.4]{ln3}. Using Riemann-Roch, it is easy to check that 
\[h^0(L)+h^0(L'^*\otimes K_C)=s+2=h^0(E),\]
so all sections of $L'^*\otimes K_C$ must lift to $E$.
\end{proof}

\begin{lem}\label{l-p2}
Let $C$ be a Petri curve of genus $g$ and $L$ a line bundle with $d_L=\operatorname{gon}(C)$ and $h^0(L)=2$. Then
\begin{enumerate}
\item $L$ is generated, in other words, the evaluation map 
\[H^0(L)\otimes\mathcal{O}_C\ra L\] 
is surjective;
\item if $g$ is even, $h^0(L\otimes L)=3$;
\item if $g$ is odd, $h^0(L\otimes L)=4$.
\end{enumerate}
\end{lem}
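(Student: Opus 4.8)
The plan is to dispatch part (1) directly from the definition of gonality, and then to obtain (2) and (3) from a single cohomological computation built on the evaluation sequence of $L$. For (1): if $|L|$ had a base point $p$, then $H^0(L(-p))=H^0(L)$, so $L(-p)$ would be a line bundle with $h^0\ge2$ and degree $\operatorname{gon}(C)-1$, contradicting the definition of the gonality. Hence $|L|$ is base-point free, which for a line bundle is the same as being generated; note this step does not use the Petri hypothesis.

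For (2) and (3), I would argue as follows. By part (1) the evaluation map fits into a short exact sequence
\[0\lra M\lra H^0(L)\otimes\mathcal{O}_C\lra L\lra0,\]
in which $M$ is a line bundle (a torsion-free, hence locally free, rank-$1$ subsheaf of a bundle on the smooth curve $C$); comparing determinants gives $M\simeq L^{*}$. Tensoring with $K_C\otimes L^{*}$ and passing to cohomology yields an exact sequence
\[0\lra H^0(K_C\otimes L^{*}\otimes L^{*})\lra H^0(L)\otimes H^0(K_C\otimes L^{*})\stackrel{\mu}{\lra} H^0(K_C),\]
in which $\mu$ is (a representative of) the Petri map of $L$. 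Since $C$ is a Petri curve, $\mu$ is injective, whence $h^0(K_C\otimes L^{*}\otimes L^{*})=0$. By Serre duality $h^1(L\otimes L)=0$, so Riemann--Roch gives $h^0(L\otimes L)=2d_L-g+1$. As $d_L=\operatorname{gon}(C)=\left\lfloor\frac{g-1}2\right\rfloor+2$, this equals $3$ when $g$ is even and $4$ when $g$ is odd, proving (2) and (3).

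The argument is largely routine, and the only point that needs a little care is the identification of the map in the long exact sequence with the Petri map, so that the Petri hypothesis can legitimately be invoked; concretely one checks that the map $H^0(L)\otimes H^0(K_C\otimes L^{*})\to H^0(K_C)$ induced by the twisted evaluation sequence is indeed the multiplication map. It is also worth keeping in mind that $\deg(K_C\otimes L^{*}\otimes L^{*})$ equals $g-4$ (resp. $g-5$) for $g$ even (resp. odd), which is non-negative for $g\ge5$, so the vanishing of its $h^0$ is a genuine consequence of the Petri condition rather than of a degree bound — for instance when $g=5$ it forces $L\otimes L\not\simeq K_C$.
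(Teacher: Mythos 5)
Your proof is correct. For part (1) you argue exactly as the paper does: a base point $p$ would give $h^0(L(-p))=2$ in degree $\operatorname{gon}(C)-1$, a contradiction. For parts (2) and (3) the paper gives no argument at all, simply citing \cite[Lemma 2.10]{ln3}; your self-contained proof via the base-point-free pencil trick is the standard route to that result and is carried out correctly. The key points all check out: the kernel of the evaluation map is $L^{*}$ by the determinant comparison; twisting by $K_C\otimes L^{*}$ identifies $H^0(K_C\otimes L^{*2})$ with the kernel of the Petri map $H^0(L)\otimes H^0(K_C\otimes L^{*})\to H^0(K_C)$, which vanishes by the Petri hypothesis; Serre duality then gives $h^1(L\otimes L)=0$; and Riemann--Roch with $d_L=\left\lfloor\frac{g-1}{2}\right\rfloor+2$ yields $h^0(L\otimes L)=2d_L-g+1$, equal to $3$ for $g$ even and $4$ for $g$ odd. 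Your closing remark that $\deg\left(K_C\otimes L^{*2}\right)=g-4$ or $g-5$ is non-negative for $g\ge5$, so that the vanishing genuinely requires the Petri condition, is a worthwhile observation that the paper does not make. The only difference from the paper is that you prove what it cites.
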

\begin{proof} (1) is obvious, since otherwise $h^0(L(-p))=2$ for some $p\in C$, contradicting the definition of $\operatorname{gon}(C)$. For (2) and (3), see \cite[Lemma 2.10]{ln3}.
\end{proof}

\begin{lem}\label{l-p3}
There exist non-split exact sequences
\[0\lra E_1\lra E\lra E_2\lra0\]
of vector bundles for which all sections of $E_2$ lift to sections of $E$ if and only if the multiplication map
\[m:H^0(E_2)\otimes H^0(E_1^*\otimes K_C)\lra H^0(E_2\otimes E_1^*\otimes K_C)\]
fails to be surjective. Such extensions are classified up to isomorphism by ${\mathbb P}((\Coker m)^*)$. 
\end{lem}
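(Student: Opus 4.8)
The plan is to analyze extensions through $\Ext^1(E_2,E_1)\cong H^1(E_2^*\otimes E_1)$ and the connecting homomorphism it induces on cohomology. Recall that extensions $0\to E_1\to E\to E_2\to0$ are classified by $\Ext^1(E_2,E_1)$, and that a class $e$ gives, from the long exact cohomology sequence, a connecting map $\delta_e\colon H^0(E_2)\to H^1(E_1)$ whose kernel is precisely the image of $H^0(E)\to H^0(E_2)$. Thus every section of $E_2$ lifts to $E$ if and only if $\delta_e=0$. So the first task is to locate, inside $\Ext^1(E_2,E_1)$, the subspace of classes $e$ with $\delta_e=0$, and to show it is nonzero exactly when $m$ fails to be surjective.

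The key point is that $e\mapsto\delta_e$ is itself linear: it is the map $\Ext^1(E_2,E_1)\to\Hom\big(H^0(E_2),H^1(E_1)\big)$ adjoint to the cup-product (equivalently Yoneda) pairing $H^0(E_2)\otimes\Ext^1(E_2,E_1)\to H^1(E_1)$. I would then invoke Serre duality, which gives $H^1(E_1)\cong H^0(E_1^*\otimes K_C)^*$ and $\Ext^1(E_2,E_1)=H^1(E_2^*\otimes E_1)\cong H^0(E_2\otimes E_1^*\otimes K_C)^*$. Under these identifications the cup product becomes the transpose of the multiplication map $m$; concretely, for $s\in H^0(E_2)$, $t\in H^0(E_1^*\otimes K_C)$ and $e\in\Ext^1(E_2,E_1)$ one has $\langle\delta_e(s),t\rangle=\langle e,m(s\otimes t)\rangle$ with respect to the Serre pairings. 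Hence the map $e\mapsto\delta_e$ is exactly the transpose $m^*\colon H^0(E_2\otimes E_1^*\otimes K_C)^*\to H^0(E_2)^*\otimes H^0(E_1^*\otimes K_C)^*$.

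It then follows that $\delta_e=0$ if and only if $e\in\Ker m^*$, and $\Ker m^*$ is the annihilator of $\IM m$, canonically $(\Coker m)^*$. Such a class can be taken non-zero — i.e.\ a non-split extension with all sections of $E_2$ lifting exists — precisely when $\Coker m\neq0$, that is, when $m$ is not surjective; conversely any such extension has non-zero class lying in $\Ker m^*$, forcing $m$ to be non-surjective. For the final assertion, observe that two extension classes which are non-zero scalar multiples of one another give isomorphic short exact sequences (rescale the inclusion $E_1\hra E$ and apply the five lemma), so the extensions in question are parametrised by $\PP(\Ker m^*)=\PP((\Coker m)^*)$.

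The main obstacle is the compatibility asserted in the second step: that under Serre duality the connecting homomorphism $\delta_e$ is the transpose of the multiplication map $m$. Everything else is formal linear algebra and the standard long exact sequence. This identity, however, requires some care — for instance, representing $e$ by a \v{C}ech $1$-cocycle (or Dolbeault $(0,1)$-form) with values in $E_2^*\otimes E_1$, computing $\delta_e(s)$ as the cup product $s\cup e$, and pairing against $t$ via the residue/trace pairing to recover the Serre pairing of $e$ with $m(s\otimes t)$. It amounts to the well-known fact that cup product and Serre duality are compatible, but it is the one place in the argument where genuine bookkeeping is needed.
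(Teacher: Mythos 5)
Your proposal is correct and follows essentially the same route as the paper: the paper's proof likewise identifies the extensions with all sections lifting as the kernel of the natural map $H^1(E_2^*\otimes E_1)\to\Hom(H^0(E_2),H^1(E_1))$ and observes that $m$ is the dual of this map, so that the kernel is $(\Coker m)^*$. Your write-up merely makes explicit the Serre-duality bookkeeping that the paper leaves implicit.
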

\begin{proof}
The extensions for which all sections lift are classified by the kernel of the natural map
\[H^1(E_2^*\otimes E_1)\lra\Hom(H^0(E_2),H^1(E_1)).\]
The map $m$ is the dual of this map.
\end{proof}

The following lemma is undoubtedly well known, but I have been unable to locate a reference.

\begin{lem}\label{l-p4}
Let $F$ be a vector bundle on $C$ with $h^1(F)\ge r$ for some positive integer $r$. Then, for $\tau$ a general torsion sheaf of length $t\le r$ and 
\begin{equation}\label{eq-tor}
0\lra F\lra E\lra\tau\lra 0
\end{equation}
a general extension of $\tau$ by $F$, $H^0(E)=H^0(F)$. 
\end{lem}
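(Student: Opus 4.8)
The plan is to analyze the long exact cohomology sequence attached to \eqref{eq-tor}, namely
\[
0\lra H^0(F)\lra H^0(E)\lra H^0(\tau)\xrightarrow{\ \delta\ } H^1(F)\lra H^1(E)\lra 0,
\]
where we have used $h^1(\tau)=0$. The statement $H^0(E)=H^0(F)$ is then equivalent to the injectivity of the connecting homomorphism $\delta\colon H^0(\tau)\to H^1(F)$. Since $\tau$ has length $t\le r\le h^1(F)$, this is a question about whether a linear map from a $t$-dimensional space to an $h^1(F)$-dimensional space, depending on the choice of $\tau$ and of the extension class, can be made injective; a dimension count shows there is room for it, so the task is to exhibit one such choice (openness of injectivity then gives the result for general $\tau$ and general extension).

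First I would reduce to the case $t=r$: if $\tau'\subseteq\tau$ is a subsheaf of length $t$ with general support, pushing out \eqref{eq-tor} along $F\hookrightarrow E$ we may instead take $\tau$ itself of length $t$, or alternatively simply note that a general length-$t$ torsion sheaf is a direct summand of a general length-$r$ one and that injectivity of $\delta$ on the larger space forces it on the smaller. Next, because $\tau$ is general, I would take it supported at $t$ distinct general points $p_1,\dots,p_t$, so $\tau=\bigoplus_{i=1}^t\mathbb{C}_{p_i}$ and the extension \eqref{eq-tor} is determined by a choice of class in $\bigoplus_i \operatorname{Ext}^1(\mathbb{C}_{p_i},F)\cong\bigoplus_i F_{p_i}$ (using $\operatorname{Ext}^1(\mathbb{C}_{p},F)\cong F\otimes\mathbb{C}_p$). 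The connecting map $\delta$ then decomposes as a sum of the $t$ one-dimensional maps $\mathbb{C}_{p_i}\to H^1(F)$, each of which is, up to scalar, the image under the coboundary $H^0(\mathbb{C}_{p_i})\to H^1(F)$ of the chosen vector in $F_{p_i}$; dually, via Serre duality $H^1(F)^*\cong H^0(F^*\otimes K_C)$, the $i$-th map is nonzero precisely when the chosen vector in $F_{p_i}$ is not annihilated by the evaluations at $p_i$ of all sections of $F^*\otimes K_C$, and the images of the $t$ maps are independent precisely when the corresponding functionals on $H^0(F^*\otimes K_C)$ are linearly independent.

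So the heart of the argument is the claim that, for $t$ general points $p_1,\dots,p_t$ with $t\le r=h^1(F)=h^0(F^*\otimes K_C)-(\deg F-n_F(g-1))$ — more robustly, for $t\le h^1(F)$ — one can choose $v_i\in F_{p_i}$ so that the $t$ resulting linear functionals $\phi_{v_i}\colon s\mapsto \langle v_i, s(p_i)\rangle$ on $W:=H^0(F^*\otimes K_C)$ are linearly independent. This I would prove by induction on $t$: having chosen $v_1,\dots,v_{t-1}$ with independent functionals spanning a subspace of dimension $t-1<\dim W$, pick a section $s_0\in W$ vanishing on the $(t-1)$-dimensional annihilator but not identically zero (possible since $t-1<\dim W$), and then choose $p_t$ general and $v_t\in F_{p_t}$ with $\langle v_t, s_0(p_t)\rangle\neq 0$ — which is possible as long as $s_0$ does not vanish at $p_t$, true for general $p_t$ since $s_0\neq 0$. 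Then $\phi_{v_t}$ is not in the span of $\phi_{v_1},\dots,\phi_{v_{t-1}}$, completing the induction. The main obstacle is thus purely this independence-of-evaluations step, and the point to be careful about is that it uses only $t\le h^1(F)$ (equivalently $t-1<\dim W$ at each stage) and the genericity of the points, with Serre duality converting the cohomological connecting map into the concrete pairing between $F_{p_i}$ and sections of $F^*\otimes K_C$; once injectivity of $\delta$ holds for this special configuration, it holds on a dense open subset of the parameter space of $(\tau,\text{extension class})$, which is exactly the assertion of the lemma.
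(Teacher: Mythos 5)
Your argument is correct and is essentially the paper's own proof seen through Serre duality: the paper reduces by induction to $t=1$, dualizes \eqref{eq-tor} to $0\to E^*\otimes K_C\to F^*\otimes K_C\to\mathbb{C}_p\to0$, and observes that for general $p$ and a general extension the induced map $H^0(F^*\otimes K_C)\to\mathbb{C}_p$ is non-zero --- which is exactly your statement that the functional $\phi_{v}$ is non-zero --- so your simultaneous linear-independence induction on the points is just the unwound form of that reduction. The only blemishes are cosmetic: the preliminary ``reduction to $t=r$'' is unnecessary (your independence argument works directly for any $t\le r$), and the parenthetical formula for $h^1(F)$ should simply read $h^1(F)=h^0(F^*\otimes K_C)$ by Serre duality, which is all you actually use.
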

\begin{proof}
By induction, it s clearly sufficient to prove this when $t=1$. In this case $\tau=\mathbb{C}_p$ for a general point $p\in C$. Dualising \eqref{eq-tor} and tensoring by $K_C$, we obtain an exact sequence
\[0\lra E^*\otimes K_C\lra F^*\otimes K_C\lra \mathbb{C}_p\lra0.\]
Now note that $h^0(F^*\otimes K_C)\ne0$ and for general $p$ and the general homomorphism 
$F^*\otimes K_C\to\mathbb{C}_p$, the map $H^0(F^*\otimes K_C)\to\mathbb{C}_p$ is non-zero. It follows that $h^0(E^*\otimes K_C)=h^0(F^*\otimes K_C)-1$ and so $h^0(E)=h^0(F)$, giving the required result.
\end{proof}

Finally, we recall that a {\em coherent system} on $C$ of type $(n,d,k)$ is a pair $(E,V)$ consisting of a vector bundle $E$ of rank $n$ and degree $d$ and a subspace $V$ of $H^0(E)$ of dimension $k$. There is a concept of $\alpha$-stability for coherent systems for $\alpha\in \mathbb{R}$ and moduli spaces $G(\alpha;n,d,k)$ and $\widetilde{G}(\alpha;n,d,k)$ exist. (For basic information on this construction, see \cite{bgmn}.) The definition of $\alpha$-stability depends on the $\alpha$-slope of $(E,V)$ defined by $\mu_\alpha(E,V):=\frac{d+\alpha k}n$.

\section{A reducible Brill-Noether locus}\label{sec-hg}

In this section, we prove our first main theorem. While the key case is for curves of genus $6$, the theorem in fact holds for infinitely many values of the genus.

\begin{lem}\label{l-hg1}
Let $C$ be a general curve of genus $g=\lambda(2\lambda-1)$ for $\lambda\in\mathbb{Z}$, $\lambda\ge2$. Then $B(1,g-\lambda,\lambda)$ is a finite set of cardinality
\begin{equation*}
(\lambda(2\lambda-1))!\prod_{i=0}^{\lambda-1}\frac{i!}{(2\lambda+i-1)!}.
\end{equation*} 
Moreover, if $L\in B(1,g-\lambda,\lambda)$, then $L$ is generated and  $h^0(L)=\lambda$.
\end{lem}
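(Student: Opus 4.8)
The plan is to identify $B(1,g-\lambda,\lambda)$ with a classical Brill–Noether locus $W^{r}_d(C)$ for a general curve and apply the standard theory recalled in Section~\ref{sec-pre}. Here $n=1$, $d=g-\lambda$ and $k=\lambda$, so in the usual notation $r=k-1=\lambda-1$. The Brill–Noether number is
\[
\rho(g,r,d)=g-(r+1)(g-d+r)=g-\lambda(g-(g-\lambda)+\lambda-1)=g-\lambda(2\lambda-1).
\]
Since $g=\lambda(2\lambda-1)$ by hypothesis, $\rho=0$. First I would check that the hypothesis $d<g-1+k$ needed for the non-emptiness and dimension statements holds: $d=g-\lambda<g-1+\lambda$ since $\lambda\ge2$; this also guarantees that $W^{r}_d$ is not all of $\operatorname{Pic}^d$, so the finiteness is meaningful. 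Because $C$ is general (hence a Petri curve) and $\rho=0$, the classical results quoted in the excerpt give that $B(1,g-\lambda,\lambda)$ is nonempty and of dimension $\rho=0$, i.e.\ a finite set; moreover every $L$ in it has $h^0(L)$ exactly $\lambda$ (if $h^0(L)\ge\lambda+1$ then $L\in B(1,g-\lambda,\lambda+1)$, whose expected dimension $\rho(g,\lambda,d)=g-(\lambda+1)(\lambda+1)<0$ forces it to be empty on a Petri curve).

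For the cardinality, I would invoke the Castelnuovo formula for the number of points of $W^{r}_d(C)$ when $\rho=0$ on a general curve (Griffiths–Harris; see \cite{acgh}, Ch.~V), namely
\[
\#W^{r}_d(C)=g!\prod_{i=0}^{r}\frac{i!}{(g-d+r+i)!}.
\]
Substituting $r=\lambda-1$, $g-d+r=\lambda+(\lambda-1)=2\lambda-1$ and $g=\lambda(2\lambda-1)$ yields exactly
\[
(\lambda(2\lambda-1))!\prod_{i=0}^{\lambda-1}\frac{i!}{(2\lambda+i-1)!},
\]
which is the claimed number. This is a direct substitution and needs no further argument.

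It remains to show that $L$ is generated. Suppose not; then $L(-p)$ has a section for some $p\in C$ with $h^0(L(-p))\ge h^0(L)-0=\lambda$ failing only if the evaluation at $p$ is nonzero, so non-generation means $h^0(L(-p))=h^0(L)=\lambda$ for some $p$, i.e.\ $L(-p)\in B(1,g-\lambda-1,\lambda)$. But the Brill–Noether number of the latter is $\rho(g,\lambda-1,g-\lambda-1)=g-\lambda(2\lambda)=g-2\lambda^2<g-\lambda(2\lambda-1)=0$ (strictly, since $\lambda\ge2$), so on a general curve $B(1,g-\lambda-1,\lambda)=\emptyset$, a contradiction. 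Hence $L$ is generated.

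The only genuine subtlety is making sure the numerical inputs to the classical theorems are correct — in particular that $\rho=0$ here rather than positive (so we really get a finite set, not a positive-dimensional variety) and that the relevant "one more section" and "base point" loci have \emph{strictly} negative $\rho$ so that genericity kills them. The arithmetic $g=\lambda(2\lambda-1)$ has been chosen precisely to arrange $\rho=0$, so the main work is bookkeeping rather than any new idea; the cardinality is then just the Castelnuovo count.
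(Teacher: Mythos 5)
Your proposal takes the same route as the paper, whose entire proof is a citation of classical Brill--Noether theory and of \cite[p.211 formula (1.2)]{acgh} after observing that $\beta(1,g-\lambda,\lambda)=0$; your verification that $\rho=0$, the Castelnuovo count, and the negativity of $\rho$ for the extra-section and base-point loci are precisely the details the paper elides, and they are all needed. One arithmetic slip to fix: in ruling out $h^0(L)\ge\lambda+1$, the relevant Brill--Noether number is $\rho=g-(\lambda+1)(g-d+\lambda)=g-(\lambda+1)(2\lambda)=-3\lambda$, not $g-(\lambda+1)(\lambda+1)$; the expression you wrote equals $\lambda^2-3\lambda-1$, which is positive for $\lambda\ge4$, so as literally stated that step would not close, whereas the correct value is negative for every $\lambda\ge2$ and the argument goes through.
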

\begin{proof}
This follows by classical Brill-Noether theory from the fact that $\beta(1,g-\lambda,\lambda)=0$ (see \cite[p.211 formula (1.2)]{acgh} for the formula for the cardinality).
\end{proof}

For $C$ as in Lemma \ref{l-hg1}, a simple calculation gives
\begin{equation}\label{eq-hg1}
\beta(2,2g-2,2\lambda)=\beta(2,K_C,2\lambda)=4\lambda(\lambda-1)-3.
\end{equation}

\begin{theorem}\label{t-g6k4}
Let $C$ be a general curve of genus $g=\lambda(2\lambda-1)$ for $\lambda\in\mathbb{Z}$, $\lambda\ge2$. Then $B(2,K_C,2\lambda)$ has pure dimension $4\lambda(\lambda-1)-3$ and is smooth outside the non-empty locus $B(2,K_C,2\lambda+1)$. Moreover $B(2,2g-2,2\lambda)$ has at least one irreducible component of dimension $4\lambda(\lambda-1)-3$ which is not contained in $B(2,K_C,2\lambda)$.
\end{theorem}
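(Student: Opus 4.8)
The plan is to construct $B(2,K_C,2\lambda)$ as a quotient of a family of extensions built from the classical Brill-Noether line bundles supplied by Lemma~\ref{l-hg1}, and then to use a parameter count to detect an extra component of $B(2,2g-2,2\lambda)$ lying off the determinant-$K_C$ locus. For the first part, I take $L\in B(1,g-\lambda,\lambda)$, so $d_L=g-\lambda$, $h^0(L)=\lambda$, and $L^*\otimes K_C$ has degree $g-2+\lambda$ with $h^0(L^*\otimes K_C)=\lambda$ by Riemann-Roch. An extension
\[
0\lra L\lra E\lra L^*\otimes K_C\lra 0
\]
has $\det E\simeq K_C$, and for those extensions in which all $\lambda$ sections of $L^*\otimes K_C$ lift one gets $h^0(E)=2\lambda$. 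By Lemma~\ref{l-p3} such extensions exist and are parametrised by $\mathbb{P}((\Coker m)^*)$ where $m\colon H^0(L^*\otimes K_C)\otimes H^0(L^*\otimes K_C)\to H^0((L^*\otimes K_C)^{\otimes 2})$. I would compute $\dim\Coker m$ using the base-point-free pencil trick / the Petri condition together with Lemma~\ref{l-p2} applied to $L$ and $L^*\otimes K_C$ (both being gonality-type bundles in this genus, with $h^0(L\otimes L)$ controlled as in Lemma~\ref{l-p2}(2)--(3)); combined with the finiteness and known cardinality of $B(1,g-\lambda,\lambda)$ this should give a family of the expected dimension $4\lambda(\lambda-1)-3=\beta(2,K_C,2\lambda)$. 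Showing that this family dominates a component — indeed that $B(2,K_C,2\lambda)$ has \emph{pure} dimension $\beta(2,K_C,2\lambda)$ and is \emph{non-empty} — follows from the symmetrized-Petri-map result of \cite{te2} quoted in Section~\ref{sec-pre}: that Petri map is injective on a general curve, so $B(2,K_C,2\lambda)$ is smooth of dimension $\beta(2,K_C,2\lambda)$ at every $E$ with $h^0(E)=2\lambda$, while the general lower-bound on components forces every component to have dimension $\ge\beta(2,K_C,2\lambda)$; smoothness then pins the dimension down exactly, with the singular locus contained in $B(2,K_C,2\lambda+1)$. Non-emptiness of $B(2,K_C,2\lambda+1)$ I would get either by the construction above allowing one further section to lift (a codimension count inside $\mathbb{P}((\Coker m)^*)$), or by citing the non-emptiness results \cite{te1,lnp,zh,bf}.

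For the second, more interesting assertion I would argue that $B(2,2g-2,2\lambda)$ contains a component strictly larger than the expected dimension $\beta(2,2g-2,2\lambda)$ away from determinant $K_C$. The idea is to vary the determinant: take $L\in B(1,g-\lambda,\lambda)$ as before and a line bundle $N$ of degree $g-2+\lambda$ with $N\not\simeq L^*\otimes K_C$ but $h^0(N)\ge\lambda-1$ — for instance $N=L^*\otimes K_C(p-q)$ for points $p,q\in C$, or a general effective modification — and form extensions $0\to L\to E\to N\to 0$ in which sufficiently many sections lift. One then counts: the choices of $L$ are finite, the choices of $N$ of this shape move in a family of dimension $2$ (the points $p,q$), and the extension space contributes $\dim\mathbb{P}((\Coker m')^*)$ which I expect to equal $\beta(2,K_C,2\lambda)$ again (the multiplication map $m'$ behaves generically like $m$ for the deformed $N$). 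Adding the $2$ extra parameters from moving $\det E=K_C(p-q)$ and subtracting nothing (these $E$ are not isomorphic to bundles with $\det\simeq K_C$, since $p\neq q$ generically and $\deg(K_C(p-q))=2g-2$ forces $\det E\not\simeq K_C$ only after checking the extension is non-split with the stated $h^0$), one obtains a locus of dimension $\beta(2,K_C,2\lambda)$ sitting inside $B(2,2g-2,2\lambda)$ but not inside $B(2,K_C,2\lambda)$; since $\beta(2,2g-2,2\lambda)=\beta(2,K_C,2\lambda)$ by \eqref{eq-hg1}, this locus has exactly the expected dimension of $B(2,2g-2,2\lambda)$ and hence (being irreducible and of the right dimension) is contained in a component of that dimension — a component that cannot be contained in $B(2,K_C,2\lambda)$ because its general member has determinant $K_C(p-q)\not\simeq K_C$.

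The main obstacles I anticipate are two. First, controlling $\Coker m$ (and $\Coker m'$) precisely: one needs that the multiplication map $H^0(L^*\otimes K_C)^{\otimes 2}\to H^0((L^*\otimes K_C)^{\otimes 2})$ has cokernel of exactly the right dimension, which on a general curve should follow from the Petri condition and a dimension count via Riemann-Roch on $(L^*\otimes K_C)^{\otimes 2}$, but requires care about whether that square bundle is nonspecial and whether $m$ is as surjective as the generic expectation predicts; here Lemma~\ref{l-p2}(2)--(3) is the natural input, possibly after replacing $L$ by $L^*\otimes K_C$ and using that both have degree near the gonality when $g=\lambda(2\lambda-1)$. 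Second, and this is the real subtlety, one must guarantee \emph{stability} (or at least semistability, and genericity within the moduli space) of the bundles $E$ so constructed: $E$ sits in an extension of a line bundle of degree $g-2+\lambda$ by one of degree $g-\lambda$, and since $g-\lambda<g-1$ the sub-line-bundle $L$ does not destabilise, but one must check no \emph{other} sub-line-bundle of degree $\ge g-1$ occurs — a general-extension argument (the destabilising sub-bundles would force a splitting of a sub-extension, cut out in positive codimension in $\mathbb{P}((\Coker m)^*)$) should handle this, but it is the step most likely to need a delicate argument rather than a formula. I would also need to verify that the constructed $E$ with $\det\neq K_C$ genuinely has $h^0(E)\ge 2\lambda$ and not more generically, so that it lands in $B(2,2g-2,2\lambda)$ and contributes a component there of the claimed dimension.
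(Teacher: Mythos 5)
The first part of your argument is essentially the paper's: it is quoted from \cite{te2} together with the non-emptiness results of \cite{bf,te1}. The second assertion is where the proposal breaks down, in two places. First, a numerical slip: since $h^0(L)=\lambda$ and $d_L=g-\lambda$, Riemann--Roch gives $h^1(L)=2\lambda-1$, so $h^0(L^*\otimes K_C)=2\lambda-1$, not $\lambda$ as you assert. Consequently your twisted quotient $N$ with $h^0(N)\ge\lambda-1$ yields at best $h^0(E)\le h^0(L)+h^0(N)=2\lambda-1<2\lambda$, so the bundles you construct as stated do not lie in $B(2,2g-2,2\lambda)$ at all; the section counts have to be redone from scratch. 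Second, and more fundamentally, even a corrected construction of an irreducible locus of dimension $4\lambda(\lambda-1)-3$ inside $B(2,2g-2,2\lambda)\setminus B(2,K_C,2\lambda)$ only shows that this locus lies in a component of dimension \emph{at least} $4\lambda(\lambda-1)-3$, whereas the theorem claims a component of dimension \emph{exactly} that. Your inference ``being irreducible and of the right dimension, is contained in a component of that dimension'' is not valid (and your own count, $\beta$ from the extension space plus $2$ from moving $p,q$, is internally inconsistent with the claimed total $\beta$). The missing ingredient is an upper bound on the local dimension of the Brill--Noether locus at the constructed bundle, which the paper obtains by proving that the Petri map $H^0(E)\otimes H^0(E^*\otimes K_C)\to H^0(E\otimes E^*\otimes K_C)$ is injective there. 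That injectivity is the heart of the paper's proof and is entirely absent from yours.

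For comparison, the paper does not use extensions of line bundles here: it takes $E$ to be a general elementary transformation $0\to L_1\oplus L_2\to E\to\mathbb{C}_{p_1}\oplus\cdots\oplus\mathbb{C}_{p_{2\lambda-2}}\to0$ with $L_1\not\simeq L_2$ in $B(1,g-\lambda,\lambda)$. Stability of the general such $E$ comes from \cite{m3}; the equality $H^0(E)=H^0(L_1)\oplus H^0(L_2)$ (hence $h^0(E)=2\lambda$) comes from Lemma \ref{l-p4}, using $h^1(L_i)=2\lambda-1\ge2\lambda-2$; the determinant is $L_1\otimes L_2(p_1+\cdots+p_{2\lambda-2})\not\simeq K_C$ for general $p_j$; and, crucially, the splitting of $H^0(E)$ makes the Petri map decompose into two pieces $\mu_1,\mu_2$, each injective because $C$ is Petri and $h^0(L_i^*(-p_1-\cdots-p_{2\lambda-2})\otimes K_C)=1$. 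If you want to salvage the extension-based approach, you would still need an analogue of this Petri-injectivity step, as well as the stability verification you yourself flag as delicate.
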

\begin{proof}
The fact that $B(2,K_C,2\lambda)$ and $B(2,K_C,2\lambda+1)$ are both non-empty is proved in \cite{bf} for $\lambda=2$ and in \cite{ot,te1} for $\lambda\ge3$. The rest of the first assertion is proved in \cite{te2}.

To obtain bundles in $B(2,2g-2,2\lambda)$ which do not have determinant $K_C$, we consider exact sequences
\begin{equation}\label{eq-g62}
0\lra L_1\oplus L_2\lra E\lra \mathbb{C}_{p_1}\oplus\cdots\oplus \mathbb{C}_{p_{2\lambda-2}}\lra0,
\end{equation}
where $L_1$, $L_2$ are distinct elements of $B(1,g-\lambda,\lambda)$ (these exist by Lemma \ref{l-hg1}) and the $p_j$ are distinct points of $C$. The general such extension gives rise to a stable bundle $E$ by \cite[Th\'eor\`eme A-5]{m3}; moreover the homomorphism $E^*\to L_i^*$ obtained by dualising \eqref{eq-g62} is surjective. By Riemann-Roch, $h^1(L_i)=2\lambda-1$, so Lemma \ref{l-p4} implies that, for a general choice of $p_j$ and \eqref{eq-g62},
 \[H^0(E)=H^0(L_1)\oplus H^0(L_2).\] 
Moreover, for general $p_j$, we have 
\[h^0(L_i^*(-p_1-\cdots-p_{2\lambda-2})\otimes K_C)=1.\] 

We now show that, for a generic choice of the $p_j$, the Petri map of $E$ is injective. This will prove that $E$ belongs to a unique irreducible component $B_0$ of dimension $4\lambda(\lambda-1)-3$ (see \eqref{eq-hg1}), which is evidently not contained in $B(2,K_C,4)$. In fact, the Petri map 
\[H^0(E)\otimes H^0(E^*\otimes K_C)\lra H^0(E\otimes E^*\otimes K_C)\]
splits into 
\[\mu_1:H^0(L_1)\otimes H^0(E^*\otimes K_C)\lra H^0(L_1\otimes E^*\otimes K_C)\]
and
\[\mu_2:H^0(L_2)\otimes H^0(E^*\otimes K_C)\lra H^0(L_2\otimes E^*\otimes K_C).\]
It is sufficient to prove that both these maps are injective. Now we have a commutative diagram
\[
\begin{array}{ccc}
H^0(L_1)\otimes H^0(E^*\otimes K_C)&\stackrel{\mu_1}{\lra}&H^0(L_1\otimes E^*\otimes K_C)\\
\Big\downarrow&&\Big\downarrow\\
H^0(L_1)\otimes H^0(L_1^*\otimes K_C)&\lra& H^0(K_C),
\end{array}\]
where the vertical arrows are induced by the homomorphism $E^*\to L_1^*$. The lower horizontal map is injective since $C$ is Petri, so
\[\Ker \mu_1\subset H^0(L_1)\otimes\Ker(H^0(E^*\otimes K_C)\lra H^0(L_1^*\otimes K)).\]
Since $E^*\to L_1^*$ is surjective,
\[\Ker(H^0(E^*\otimes K_C)\lra H^0(L_1^*\otimes K_C))=H^0(L_2^*(-p_1-\cdots-p_{2\lambda-2})\otimes K_C).\]
Moreover,
$h^0(L_2^*(-p_1-\cdots-p_{2\lambda-2})\otimes K_C)=1$, so
\[\mu_1|H^0(L_1)\otimes H^0(L_2^*(-p_1-\cdots p_{2\lambda-2})\otimes K_C)\]
 is injective. Hence $\Ker \mu_1=0$ and $\mu_1$ is injective. The same argument applies to $\mu_2$, completing the proof that $B_0$ has dimension $4\lambda(\lambda-1)-3$.
\end{proof}

\begin{rem}\label{r-g63}
\emph{The fact that $B(2,2g-2,2\lambda)$ has a component of dimension $4\lambda(\lambda-1)-3$ is proved in \cite{te}. The argument in the proof above, using \cite{m3}, is more precise and shows that there is a component not contained in $B(2,K_C,2\lambda)$. On the other hand, it is not proved in \cite{m3} that the component $B_0$ has dimension $4\lambda(\lambda-1)-3$, so we need to prove this directly.} 
\end{rem} 

\begin{rem}\label{r52}
\emph{See \cite[Theorems 5.13, 5.17]{cf} for many examples of non-empty Brill-Noether loci. Our examples do not satisfy the hypotheses in these theorems.}
\end{rem}

\section{Genus 6}\label{sec-g6}

In genus $6$, one can prove a good deal more. In this case, we have
\[\beta(2,10,5)=-4,\ \beta(2,K_C,5)=0,\ \beta(2,10,4)=\beta(2,K_C,4)=5.\]
\begin{theorem}\label{p-g6k5} Let $C$ be a general curve of genus $6$. Then
$\widetilde{B}(2,10,k)=\emptyset$ for $k\ge6$. Moreover
$B(2,10,5)=\widetilde{B}(2,10,5)=B(2,K_C,5)$ consists of a single point $E_{2,10,5}$ and $E_{2,10,5}$ is generated. 
\end{theorem}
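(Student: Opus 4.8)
The plan is to pass first to stable bundles computing $\Cliff_2(C)$ by means of Clifford-type bounds, then to realise any such bundle as an extension of a $g^2_6$ by a $g^1_4$ via Corollary~\ref{c-p1}, then to force the determinant to equal $K_C$ by analysing the associated multiplication map, and finally to read off generation. For the emptiness and the reduction: by \eqref{eqc2}, $\Cliff_2(C)=\Cliff(C)=2$, so any semistable $F$ of rank $2$ and degree $10$ with $h^0(F)\ge4$ has $\Cliff(F)=7-h^0(F)\ge2$, i.e. $h^0(F)\le5$. Applying this to $\gr(E)$ for $[E]\in\widetilde{M}(2,10)$: if $\gr(E)$ is stable of rank $2$ then $h^0(\gr(E))\le5$; if $\gr(E)=M_1\oplus M_2$ with $d_{M_1}=d_{M_2}=5$, then $h^0(M_i)\le2$ (since $C$ is Petri and $\beta(1,5,3)<0$), so $h^0(\gr(E))\le4$. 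Hence $\widetilde{B}(2,10,k)=\emptyset$ for $k\ge6$; and for $k=5$ the second alternative is impossible (it would give $h^0(E)\le h^0(\gr(E))\le4$), so any $[E]\in\widetilde{B}(2,10,5)$ has $\gr(E)$ stable of rank $2$, whence $E=\gr(E)$ is stable with $h^0(E)=5$ and $\Cliff(E)=2$. Thus $\widetilde{B}(2,10,5)=B(2,10,5)$ and every member computes $\Cliff_2(C)$; the inclusion $B(2,K_C,5)\subseteq B(2,10,5)$ is immediate from the definitions.

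Now let $E\in B(2,10,5)$. Since $g=6$ is even, Corollary~\ref{c-p1} gives a non-split exact sequence $0\to L\to E\to L'^*\otimes K_C\to0$ with $d_L=d_{L'}=4$, $h^0(L)=h^0(L')=2$ and all sections of $L'^*\otimes K_C$ lifting to $E$; by Lemma~\ref{l-hg1} (with $\lambda=2$), $L$ and $L'$ belong to the finite set $B(1,4,2)$ of generated $g^1_4$'s, and $L'^*\otimes K_C$ is one of the finitely many $g^2_6$'s. By Lemma~\ref{l-p3}, such non-split extensions are parametrised by $\PP((\Coker m)^*)$, where
\[m\colon H^0(L'^*\otimes K_C)\otimes H^0(L^*\otimes K_C)\lra H^0(L^*\otimes L'^*\otimes K_C^{\otimes2})\]
is a linear map $\CC^9\to\CC^7$ (here $h^0(L^*\otimes K_C)=h^1(L)=3$, likewise for $L'$, and $\deg(L^*\otimes L'^*\otimes K_C^{\otimes2})=12>2g-2$). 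If $L\simeq L'$, then $m$ factors through $\operatorname{Sym}^2H^0(L^*\otimes K_C)$ (dimension $6$), and the induced map is injective — otherwise $|L^*\otimes K_C|$ would carry $C$ into a plane conic, giving a map $C\to\PP^1$ of degree $\le3$ and contradicting $\operatorname{gon}(C)=4$ — so $\Coker m$ is one-dimensional, the extension is unique, and its determinant is $K_C$. The key claim is that if $L\not\simeq L'$ then $m$ is surjective; granting this, the only extension of $L'^*\otimes K_C$ by $L$ in which all sections lift is the split one $L\oplus(L'^*\otimes K_C)$, which is not semistable, contradicting stability of $E$. Hence $\det E=K_C$ for every $E\in B(2,10,5)$, so $B(2,10,5)=B(2,K_C,5)$; this is non-empty and, by \cite{te2}, smooth of dimension $\beta(2,K_C,5)=0$, and reduces to a single point $E_{2,10,5}$ by the complete low-genus results of \cite{bf} (alternatively, one shows directly that the finitely many extension bundles coincide, e.g. that every $g^1_4$ occurs as a subpencil of each).

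It remains to see that $E_{2,10,5}$ is generated. Write it as $0\to L\to E_{2,10,5}\to N\to0$ with $N=L^*\otimes K_C$ and all sections of $N$ lifting, so that $0\to H^0(L)\to H^0(E_{2,10,5})\to H^0(N)\to0$ is exact. Here $L$ is generated (Lemma~\ref{l-hg1}), and $N$ is generated because otherwise $N(-p)$ would be a $g^2_5$, excluded by $\beta(1,5,3)<0$; so for each $p\in C$ the surjectivity of $H^0(L)\to L_p$ and of $H^0(N)\to N_p$, together with the exact fibre sequence $0\to L_p\to(E_{2,10,5})_p\to N_p\to0$, forces $H^0(E_{2,10,5})\to(E_{2,10,5})_p$ to be surjective. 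The main obstacle in the whole argument is the surjectivity of $m$ when $L\not\simeq L'$ — equivalently, the statement that no bundle in $B(2,10,5)$ has determinant different from $K_C$. This is exactly where the special geometry of a general genus-$6$ curve must be used — through its plane sextic models, or through the quintic del Pezzo surface containing its canonical model, on which the distinct $g^2_6$'s $L^*\otimes K_C$ and $L'^*\otimes K_C$ appear as distinct conic-bundle classes — and one computes there that $\Coker m=0$; the remaining steps are routine bookkeeping with the Clifford bound, the cited structural results, and the defining extension.
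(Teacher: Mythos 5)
Your reduction of the theorem to the surjectivity of the multiplication map
\[m\colon H^0(L'^*\otimes K_C)\otimes H^0(L^*\otimes K_C)\lra H^0(L^*\otimes L'^*\otimes K_C^{\otimes2})\]
for $L\not\simeq L'$ is a correct reduction, but you do not prove that surjectivity: you flag it yourself as ``the main obstacle'' and then only gesture at plane sextic models and the quintic del Pezzo surface without carrying out any computation. Since ruling out $\det E\not\simeq K_C$ is precisely the substance of the equality $B(2,10,5)=B(2,K_C,5)$, this is a genuine gap, not routine bookkeeping. The paper avoids this multiplication map entirely: it tensors the extension $0\to L\to E\to L''\to0$ by $L$ and applies the base-point-free pencil trick to get $h^0(E\otimes L)\ge 2h^0(E)-h^0(E\otimes L^*)\ge 9$, using the bound $h^0(E\otimes L^*)\le1$ for the stable bundle $E\otimes L^*$ of rank $2$ and degree $2$ from \cite[Theorem B]{bgn}; combined with $h^0(L\otimes L)=3$ (Lemma \ref{l-p2}) this gives $h^0(L''\otimes L)\ge 9-3=6=g$ for the line bundle $L''\otimes L=\det E$ of degree $2g-2$, which forces $\det E\simeq K_C$ directly, with no case division on whether $L\simeq L'$.

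A second, smaller gap: having shown that $B(2,10,5)=B(2,K_C,5)$ is finite (smooth of dimension $0$), you attribute the reduction to a single point to the ``complete low-genus results of \cite{bf}''; that reference is used in the paper for non-emptiness, not for uniqueness. The paper obtains uniqueness from Voisin's result \cite{v}: $C$ embeds in a K3 surface whose Picard group is generated by the class of $C$, and the five line bundles of $B(1,4,2)$ then all determine the same bundle $E_{2,10,5}$. Your proposed alternative (``show every $g^1_4$ occurs as a subpencil of each'') is again only a suggestion. The remaining parts of your argument --- the Clifford bound for $k\ge6$, the elimination of strictly semistable classes via $B(1,5,3)=\emptyset$, and the generation argument via generatedness of $L$ and $L^*\otimes K_C$ together with the lifting of sections --- are correct, and the last of these is a valid alternative to the paper's elementary-transformation argument.
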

\begin{proof}
Let $E$ be a semistable bundle of rank $2$ and degree $10$. Since $\Cliff_2(C)=\Cliff(C)=2$ by \eqref{eqc2}, $h^0(E)\le5$. This proves the first statement. By classical Brill-Noether theory, $B(1,5,3)=\emptyset$; hence $B(2,10,5)=\widetilde{B}(2,10,5)$. Note also that
$B(2,K_C,5)$ is non-empty by \cite{bf} and is smooth of dimension $0$ by \cite{te2}. Now suppose that $E\in B(2,10,5)$. By Corollary \ref{c-p1}, there exists an exact sequence
\begin{equation}\label{eq-g61}
0\lra L\lra E\lra L''\lra0,
\end{equation}
where $d_L=4$, $h^0(L)=2$, $d_{L''}=6$, $h^0(L'')=3$ and all sections of $L''$ lift to $E$. Tensoring \eqref{eq-g61} by $L$ and taking global sections, we get
\[h^0(L''\otimes L)\ge h^0(E\otimes L)-h^0(L\otimes L).\]
Since $C$ is Petri, by Lemma \ref{l-p2}, $h^0(L\otimes L)=3$, while the sequence
\[0\lra E\otimes L^*\lra E\otimes H^0(L)\lra E\otimes L\lra0\]
gives
\[h^0(E\otimes L)\ge2h^0(E)-h^0(E\otimes L^*)\ge9,\]
since $E\otimes L^*$ is stable of rank $2$ and degree $2$, so that $h^0(E\otimes L^*)\le1$ by \cite[Theorem B]{bgn}. So
\[h^0(L''\otimes L)\ge 9-3=6.\]
Since $d_{L''\otimes L}=10$, this implies that $\det E=L''\otimes L\simeq K_C$. It follows that every $E\in B(2,10,5)$ can be expressed in the form \eqref{eq-g61} with $L''=L^*\otimes K_C$. By Lemma \ref{l-hg1}, there are five choices for $L$. However, since $C$ can be embedded in a K3 surface $S$ for which $\operatorname{Pic}S$ is generated by the class of $C$, these five line bundles all determine the same bundle $E_{2,10,5}$ (see the paragraph following the statement of \cite[Th\'eor\`eme 0.1]{v}). 

Note finally that, if $E_{2,10,5}$ is not generated, then an elementary transformation yields a bundle in $B(2,9,5)$, contradicting the fact that $\Cliff_2(C)=2$.
\end{proof}

\begin{rem}\label{r-g61}
\emph{From the definition of $\Cliff_2(C)$, it follows that any bundle computing $\Cliff_2(C)$ has either degree $8$ and $h^0=4$ or degree $10$ and $h^0=5$. Hence, by Theorem \ref{p-g6k5} and \cite[Proposition 5.10]{ln3}, the only such bundles are strictly semistable bundles of degree $8$ with $h^0=4$ and the stable bundle $E_{2,10,5}$.} 
\end{rem}

\begin{cor}\label{c-g61}
For all $\alpha>0$,
\[G(\alpha;2,10,5)=\{(E,H^0(E))|E=E_{2,10,5}\}.\]
\end{cor}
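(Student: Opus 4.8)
The statement to prove is Corollary \ref{c-g61}: for every $\alpha>0$, the moduli space $G(\alpha;2,10,5)$ consists of the single coherent system $(E_{2,10,5},H^0(E_{2,10,5}))$. By Theorem \ref{p-g6k5} the only rank-$2$, degree-$10$ bundle with $h^0\ge 5$ is the semistable (in fact stable) bundle $E_{2,10,5}$, and it has $h^0=5$, so a coherent system $(E,V)$ of type $(2,10,5)$ must have $E=E_{2,10,5}$ and $V=H^0(E_{2,10,5})$. The only thing left to check is that this one coherent system is actually $\alpha$-stable for all $\alpha>0$; a priori it could fail to be $\alpha$-semistable, in which case $G(\alpha;2,10,5)$ would be empty. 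So the real content is an $\alpha$-stability verification for a single, very rigid object.

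First I would recall the definition: a coherent system $(E,V)$ of type $(n,d,k)$ is $\alpha$-stable if for every proper coherent subsystem $(E',V')$ (with $E'\subset E$ a subbundle and $V'\subseteq V\cap H^0(E')$) one has $\mu_\alpha(E',V')<\mu_\alpha(E,V)$, i.e. $\frac{d_{E'}+\alpha\dim V'}{n_{E'}}<\frac{10+5\alpha}{2}$. Since $E=E_{2,10,5}$ is rank $2$, the only possible destabilizing subsystems come from rank-$1$ subsheaves $L\subset E$; replacing $L$ by its saturation only increases the degree and the number of sections, so it suffices to consider line subbundles $L\subset E$, and we need $d_L+\alpha\dim(V\cap H^0(L))<5+\tfrac52\alpha$ for all such $L$. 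Here $V=H^0(E)$, so $V\cap H^0(L)=H^0(L)$. Write $h^0(L)=\ell$ and $d_L=m$. Two facts are available: since $E$ is stable, $m\le 4$; and since $E$ computes $\Cliff_2(C)$ and $E$ is generated, any subpencil has $d_L\ge\operatorname{gon}(C)=4$, while by classical Brill--Noether on a general genus-$6$ curve a line bundle with $h^0\ge 2$ has degree $\ge 4$, one with $h^0\ge 3$ has degree $\ge 7$ (as $B(1,6,3)=\emptyset$), etc. So for a line subbundle $L$ we have either $\ell\le 1$ (any $m\le 4$), or $\ell=2$ and $m=4$. In the first case the inequality $m+\alpha\ell<5+\tfrac52\alpha$ reads $m-5<\alpha(\tfrac52-\ell)$, and since $m-5\le -1<0\le\alpha(\tfrac52-\ell)$ for $\ell\le 1$, it holds for all $\alpha>0$. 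In the case $\ell=2$, $m=4$ it reads $4+2\alpha<5+\tfrac52\alpha$, i.e. $\tfrac12\alpha>-1$, again true for all $\alpha>0$. Hence $(E_{2,10,5},H^0(E_{2,10,5}))$ is $\alpha$-stable for every $\alpha>0$, so it defines a point of $G(\alpha;2,10,5)$, and by the first paragraph it is the only one.

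The one genuine obstacle is making rigorous the claim that a rank-$1$ \emph{subsheaf} $L\subset E$ contributing at most $h^0(L)=2$ has degree at most $4$, i.e. that there is no subpencil of degree $3$ (which would be a pencil of degree $3$, impossible on a general genus-$6$ curve since $\operatorname{gon}(C)=4$) and, more subtly, that a line subbundle with $\ell\ge 3$ — which by Brill--Noether would need $d_L\ge 7>4$ — cannot occur inside the stable bundle $E$; this last point is immediate from stability of $E$ since $d_L\le d_E/2=5$. So once one notes $d_L\le 4$ for any line subbundle and $h^0(L)\le 2$ whenever $d_L\le 6$ (again Brill--Noether, $B(1,6,3)=\emptyset$ used already in the proof of Theorem \ref{p-g6k5}), the case analysis above is complete and the corollary follows. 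I expect the whole argument to be short: the substantive work was already done in Theorem \ref{p-g6k5}, and what remains is the elementary slope comparison, with the only care needed being the uniform treatment of all $\alpha>0$ and the reduction from subsheaves to subbundles.
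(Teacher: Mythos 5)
Your verification that $(E_{2,10,5},H^0(E_{2,10,5}))$ is $\alpha$-stable for all $\alpha>0$ is essentially the paper's forward direction and is fine, but there is a genuine gap in your opening reduction. You claim that Theorem \ref{p-g6k5} forces any coherent system of type $(2,10,5)$ to have underlying bundle $E_{2,10,5}$. That theorem only classifies \emph{semistable} bundles of rank $2$ and degree $10$ with $h^0\ge5$; a coherent system carries no stability hypothesis on $E$, and an $\alpha$-stable coherent system need not have a semistable underlying bundle. There certainly exist unstable rank-$2$, degree-$10$ bundles with $h^0\ge5$ (for instance $L\oplus\mathcal{O}_C$ with $d_L=10$ has $h^0=6$), so one must rule out that some such $E$, together with a $5$-dimensional $V\subset H^0(E)$, gives a point of $G(\alpha;2,10,5)$. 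This is exactly the converse half of the paper's proof: if $E$ is not stable it admits a line subbundle $L$ with $d_L\ge5$, whence $d_{E/L}\le5$ and $h^0(E/L)\le2$ (as $B(1,5,3)=\emptyset$), so $\dim(V\cap H^0(L))\ge3$ and the subsystem $(L,V\cap H^0(L))$ has $\alpha$-slope $\ge 5+3\alpha>5+\tfrac52\alpha$, contradicting $\alpha$-stability. Without this step your proof only establishes one inclusion.

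A secondary point: your appeal to ``$B(1,6,3)=\emptyset$'' is false on a general curve of genus $6$, since $\beta(1,6,3)=0$ and indeed the five bundles $K_C\otimes L^*$ with $L\in B(1,4,2)$ lie in $B(1,6,3)$; the fact actually used in the paper is $B(1,5,3)=\emptyset$. This slip does not damage your forward direction, because stability of $E_{2,10,5}$ already forces any line subbundle to have degree at most $4$, where $h^0\le2$ holds; but the citation should be corrected.
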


\begin{proof}
If $E=E_{2,10,5}$, then $h^0(E)=5$ by Theorem \ref{p-g6k5}. Moreover, $E$ is stable, so any line subbundle has degree $\le4$ and hence $h^0\le2$. Hence $(E,H^0(E))\in G(\alpha;2,10,5)$ for all $\alpha>0$.

Conversely, suppose $(E,V)\in G(\alpha;2,10,5)$. If $E$ is not stable, then $E$ admits a line subbundle $L$ of degree $\ge5$. Hence $d_{E/L}\le5$ and $h^0(E/L)\le2$. It follows that $\dim(V\cap H^0(L))\ge3$, contradicting the $\alpha$-stability of $(E,V)$.
\end{proof}

The following proposition gives an example of a non-empty rank-$3$ Brill-Noether locus on $C$ with negative Brill-Noether number.

\begin{prop}\label{p-r3} Let $C$ be a general curve of genus $6$. Then
$B(3,10,k)=\emptyset$ for $k\ge6$. Moreover
 $B(3,10,5)$ consists of a single point $E_{3,10,5}$ and there exists a short exact sequence
\begin{equation}\label{eq-r30}
0\lra E_{3,10,5}^*\lra\mathcal{O}_C^{\oplus5}\lra E_{2,10,5}\lra0.
\end{equation}
Furthermore, for all $\alpha>0$,
\[G(\alpha;3,10,5)=\{(F,H^0(F))|F=E_{3,10,5}\}.\]
\end{prop}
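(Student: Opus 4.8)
The plan is to exploit the duality between rank-$3$ and rank-$2$ bundles set up by the exact sequence \eqref{eq-r30}, reducing everything to facts about $E_{2,10,5}$ established in Theorem \ref{p-g6k5}. First I would handle the non-emptiness and vanishing statements for $k\ge6$. If $F\in B(3,10,k)$ is semistable, then $\Cliff(F)=\frac13(10-2(h^0(F)-3))=\frac{16-2h^0(F)}3$, and since $d_F=10\le 3(g-1)=15$ this would force $\Cliff_3(C)\le\Cliff(F)$; one needs $\Cliff_3(C)=2$ (the general-curve value, which for $n=3$ follows from Mercat in rank $2$ via \cite{ln3} or directly from $\Cliff_n\le\Cliff$ together with a lower bound), giving $h^0(F)\le5$, hence $B(3,10,k)=\emptyset$ for $k\ge6$. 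Here I should be a little careful: $\Cliff_3(C)=2$ on a general curve of genus $6$ needs to be cited or argued, and if it is not directly available I would instead argue by contradiction using a destabilising sub- or quotient line bundle of low degree and classical Brill--Noether vanishing ($B(1,d,k)=\emptyset$ for the relevant small $(d,k)$), exactly as in the proof of Corollary \ref{c-p1}.

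Next, for the main claim, suppose $F\in B(3,10,5)$, so $h^0(F)\ge5$ and by the above $h^0(F)=5$. I would like $F$ to be globally generated with $H^0(F)$ computing the generation; this should follow because an elementary transformation of a non-generated $F$ at a point gives a bundle in $B(3,9,5)$, and one checks (again via $\Cliff_3$ or via destabilising line bundles and classical vanishing) that this is impossible — this is the rank-$3$ analogue of the last paragraph of the proof of Theorem \ref{p-g6k5}. Granting this, evaluation gives a surjection $\mathcal O_C^{\oplus5}\to F$, whose kernel $G$ is a bundle of rank $2$ and degree $-10$; dualising, $\mathcal O_C^{\oplus5}\to G^*$ is the evaluation map on $H^0(G^*)$ (one must verify $h^0(G^*)=5$, which comes from the long exact sequence $0\to H^0(G)\to H^0(\mathcal O_C^{\oplus5})\to H^0(F)\to H^1(G)$ together with $H^0(G)=0$ — true since $G\hookrightarrow\mathcal O_C^{\oplus5}$ forces $G$ to have no sections as $d_G<0$ — and $H^1(G)=0$, which follows from $h^0(G^*\otimes K_C)$ computations and Riemann--Roch). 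So $G^*$ is a rank-$2$ degree-$10$ bundle with $h^0\ge5$, hence semistable (a destabilising line subbundle would have degree $\ge6$, giving a subpencil-type contradiction with classical Brill--Noether); by Theorem \ref{p-g6k5}, $G^*\cong E_{2,10,5}$. Therefore $F^*=G\cong E_{2,10,5}^*$ sits in \eqref{eq-r30}, and $F\cong E_{3,10,5}$ is uniquely determined. Conversely the $F$ defined by \eqref{eq-r30} has $h^0(F)=5$ (read off the same long exact sequence, now using that $E_{2,10,5}$ is generated so the sequence is exact on sections) and is stable: any quotient line bundle of $F$ of degree $\le3$ would dualise to a line subbundle of $E_{3,10,5}^*$, equivalently a sub-line-bundle situation inside $\mathcal O_C^{\oplus 5}$ compatible with the surjection to $E_{2,10,5}$, which one rules out using stability of $E_{2,10,5}$.

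Finally, for the coherent-system statement, the argument is the direct analogue of Corollary \ref{c-g61}. If $F=E_{3,10,5}$ then $h^0(F)=5$ and, $F$ being stable, every proper subbundle $F'$ has $\mu(F')<10/3$, so $d_{F'}\le 3$ if $\rk F'=1$ and $d_{F'}\le 6$ if $\rk F'=2$; classical Brill--Noether on the general curve then bounds $h^0(F')$ (a line bundle of degree $\le3$ has $h^0\le1$ since $\operatorname{gon}(C)=4$; a rank-$2$ subbundle of degree $\le6$ has $h^0\le3$, by Clifford-type estimates or by noting $\Cliff_2(C)=2$), so $(F,H^0(F))$ is $\alpha$-stable for every $\alpha>0$. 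Conversely, if $(F,V)\in G(\alpha;3,10,5)$ with $F$ not $\cong E_{3,10,5}$, then either $F$ is not stable, in which case a destabilising sub- or quotient bundle carries too many sections and violates $\alpha$-stability of $(F,V)$ (as in Corollary \ref{c-g61}), or $F$ is stable but $h^0(F)>5$, contradicting $B(3,10,6)=\emptyset$; either way we reach a contradiction.

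The main obstacle I anticipate is controlling $h^0$ along the dualisation: specifically verifying $h^0(G^*)=5$ and, on the converse side, exactness on global sections of \eqref{eq-r30}, since both rest on vanishing statements ($H^0(G)=0$, $H^1(G)=0$) that must be extracted cleanly, and on knowing that $E_{2,10,5}$ is generated — which is precisely the last assertion of Theorem \ref{p-g6k5}, so the pieces should fit. The secondary nuisance is pinning down $\Cliff_3(C)=2$ (or circumventing it), but the destabilising-line-bundle route reduces this to classical Brill--Noether vanishing, which holds on a general curve.
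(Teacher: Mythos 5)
Your overall strategy coincides with the paper's: deduce $B(3,10,k)=\emptyset$ for $k\ge 6$ from $\Cliff_3(C)=2$, show any $F\in B(3,10,5)$ is generated via an elementary transformation argument, pass to the kernel of the evaluation map and dualise to land on a rank-$2$, degree-$10$ bundle with $h^0\ge5$ that must be $E_{2,10,5}$, and imitate Corollary \ref{c-g61} for the coherent-system statement. However, two points need attention, one of which is a genuine gap.

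First, a factual error in the bookkeeping: you assert $H^1(G)=0$ for the kernel $G$ of $\mathcal{O}_C^{\oplus5}\to F$. This is false: $G$ has rank $2$ and degree $-10$, so by Riemann--Roch $\chi(G)=-20$, and since $H^0(G)=0$ you get $h^1(G)=20$. Moreover the sequence you quote computes $h^0(F)$ and $h^1(G)$, not $h^0(G^*)$. The statement you actually need, $h^0(G^*)\ge5$, is true and follows immediately from the \emph{dual} sequence $0\lra F^*\lra \mathcal{O}_C^{\oplus5}\lra G^*\lra 0$ together with $h^0(F^*)=0$ (which holds as $F$ is stable of positive degree); this is exactly how the paper argues. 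So this is repairable, but as written the justification is wrong.

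Second, and more seriously, your proof that the bundle $F$ defined by \eqref{eq-r30} is stable only rules out destabilising quotient \emph{line} bundles (degree $\le3$). For a rank-$3$ bundle you must also exclude destabilising rank-$2$ quotients, i.e.\ stable rank-$2$ quotients $G$ with $d_G\le6$ (equivalently, line subbundles of $F$ of degree $\ge4$); these are not detected by any quotient line bundle computation. This is where the real work in the paper lies: for such a $G$, generated by the image $V$ of $H^0(E)^*$ with $h^0(G^*)=0$, one has $\dim V\ge3$; if $\dim V=3$ the kernel $K$ of $V\otimes\mathcal{O}_C\to G$ is a line bundle with $h^0(K^*)\ge3$, forcing $d_G\ge6$, while the induced map $E^*\to K$ is non-zero (else $E^*$ would map into a proper direct factor of $H^0(E)^*\otimes\mathcal{O}_C$), so $K^*\subset E$ and $d_G=-d_K\le4$ by stability of $E=E_{2,10,5}$ --- a contradiction; hence $\dim V\ge4$ and $\Cliff(G)\ge\Cliff_2(C)=2$ forces $d_G\ge8$. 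Without some argument of this kind your existence statement (that \eqref{eq-r30} actually produces a point of $B(3,10,5)$) is not established. A smaller instance of the same omission occurs in the coherent-system converse, where the paper also treats rank-$2$ subbundles of degree $\ge7$ separately.
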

\begin{proof}
By \cite[Theorem 2.1]{m} or \cite[Proposition 3.5]{ln}, we have $\Cliff_3(C)=2$. Hence any stable bundle $F$ of rank $3$ and degree $10$ has $h^0(F)\le5$.

Now recall that $E:=E_{2,10,5}$ is generated and $h^0(E)=5$. We define a bundle $F$ of rank $3$ and degree $10$ (hence slope $\mu(F)=\frac{10}3$) by the exact sequence
\begin{equation}\label{eq-r31}
0\lra F^*\lra H^0(E)\otimes\mathcal{O}_C\lra E\lra0.
\end{equation}
Dualising, we obtain
\begin{equation*}
0\lra E^*\lra H^0(E)^*\otimes\mathcal{O}_C\lra F\lra0.
\end{equation*}
Since $h^0(E^*)=0$, it follows that $h^0(F)=5$. It remains to prove that $F$ is stable.

If $L$ is a quotient line bundle of $F$, then $L$ is generated and, since $L^*\subset F^*$ and $h^0(F^*)=0$ by \eqref{eq-r31}, $h^0(L^*)=0$. Hence $h^0(L)\ge2$ and  $d_L\ge4>\mu(F)$. Now suppose that $G$ is a stable rank-2 quotient bundle of $F$. Then $G$ is generated by the image $V$ of $H^0(E)^*$ in $H^0(G)$ and $h^0(G^*)=0$, so $\dim V\ge3$. Let $K$ be the kernel of the canonical surjection $V\otimes\mathcal{O}_C\ra G$. If $\dim V=3$, then $K$ is a line bundle with $h^0(K^*)\ge3$, so $d_G=-d_K\ge6$. On the other hand, the homomorphism $E^*\ra K$ is non-zero, since otherwise $E^*$ would map into a proper direct factor of $H^0(E)^*\otimes\mathcal{O}_C$, a contradiction. Hence $K^*\subset E$ and $-d_K\le4$ by stability of $E$. This is a contradiction. It follows that $\dim V\ge4$ and hence $d_G\ge8$ since $\Cliff(G)\ge\Cliff_2(C)=2$. Thus $F$ is stable. We define $E_{3,10,5}:=F$, so that \eqref{eq-r31} becomes \eqref{eq-r30}.

Conversely, let $F\in B(3,10,5)$. We have already observed that $h^0(F)=5$. If $F$ is not generated, then, applying an elementary transformation, there exists a semistable bundle of rank $3$ and degree $9$ with $h^0=5$; this contradicts the fact that $\Cliff_3(C)=2$. We can therefore define a bundle $G$ of rank $2$ and degree $10$ by the exact sequence
\begin{equation*}
0\lra G^*\lra H^0(F)\otimes\mathcal{O}_C\lra F\lra0.
\end{equation*}
Dualising, we have
\begin{equation}\label{eq-r33}
0\lra F^*\lra H^0(F)^*\otimes\mathcal{O}_C\lra G\lra0.
\end{equation}
Now suppose $L$ is a quotient line bundle of $G$ and let $V$ be the image of $H^0(F)^*$ in $H^0(L)$. Arguing as above, we have $\dim V\ge2$. If $\dim V\ge3$, then $d_L\ge6>\mu(G)$. If $\dim V=2$, then $d_L\ge4$. Moreover, by stability of $F$, the kernel of the surjection $V\otimes\mathcal{O}_C\ra L$ is a line bundle of degree $\ge-3$, so $d_L=-d_K\le3$. This gives a contradiction, so $G$ is stable and hence $G\simeq E_{2,10,5}$. So \eqref{eq-r33} becomes \eqref{eq-r30} and $F\simeq E_{3,10,5}$.

Finally, if $F=E_{3,10,5}$, it is clear that $(F,H^0(F))\in G(\alpha;3,10,5)$ for all $\alpha>0$. Conversely, if $(F,V)\in G(\alpha;3,10,5)$ with $F$ not stable, then $F$ has either a line subbundle $L$ of degree $\ge4$ or a rank-$2$ subbundle $G$ of degree $\ge7$. In the first case, we must have $h^0(L)\le1$, so $h^0(F/L)\ge4$, which is impossible. In the second case, $h^0(G)\le3$, so $h^0(F/G)\ge2$, which again is impossible. So $F$ is stable and hence $F\simeq E_{3,10,5}$. 
\end{proof}
\begin{rem}
\emph{The fact that $B(3,K_C,5)$ has dimension zero shows that Osserman's lower bound for the dimension of the Brill-Noether locus for bundles of determinant $K_C$ when $r=3$, $k=5$ \cite[Theorem 1.1(III)]{oss} can be sharp.}
\end{rem}

\begin{rem}\label{r-g64}
\emph{When $g=6$, there is a method of constructing bundles in $B(2,10,4)$ with determinant different from $K_C$ which differs from that that in the proof of Theorem \ref{t-g6k4} (this is similar to the construction described in more generality in \cite[Theorem 5.13]{cf}, but our examples do not satisfy the hypotheses of this theorem). Consider non-trivial extensions
\begin{equation}\label{eq-g63}
0\lra L\lra E\lra L''\lra0,
\end{equation}
where $L\in B(1,4,2)$ and $L''$ is a generated line bundle of degree $6$ with $h^0(L'')=2$. If $h^0(E)=4$, then $E$ is generated and stable. In fact, if $E$ were not stable, it would admit a line subbundle $M$ of degree $5$ with $h^0(M)=2$. But then there would exist a non-zero homomorphism $M\to L''$; since $h^0(M)=h^0(L'')$, this implies that $L''$ is not generated. To obtain a bundle $E$ with $h^0(E)=4$ in \eqref{eq-g63}, we require all sections of $L''$ to lift to sections of $E$. For this, by Lemma \ref{l-p3}, we need the multiplication map 
\begin{equation}\label{eq-g64}
H^0(L'')\otimes H^0(L^*\otimes K_C)\lra H^0(L''\otimes L^*\otimes K_C)
\end{equation}
to fail to be surjective. Calculating dimensions, the LHS of \eqref{eq-g64} has dimension $6$, while the RHS has dimension $7$, so surjectivity does indeed fail. Moreover, by the base-point free pencil trick, the kernel of \eqref{eq-g64} is $H^0(L^*\otimes L''^*\otimes K_C)$, which is zero since $L''\not\simeq L_1^*\otimes K_C$. It follows that the cokernel of \eqref{eq-g64} has dimension $1$, so, for any given $L''$, the extension  is unique up to isomorphism. By classical Brill-Noether theory, the bundles $L''$ form an irreducible variety of dimension $\beta(1,6,2)=4$. Hence all such extensions belong to a single irreducible component $B_1$ of $B(2,10,4)$. Since there are five possible choices for $L$ (see Lemma \ref{l-hg1}), we obtain a possible total of five irreducible components in this way. Similarly there are ten possibilities for the component $B_0$ in Theorem \ref{t-g6k4}. If we could prove that $B_1=B_0$ (or equivalently that the bundles $E$ in \eqref{eq-g62} belong to $B_1$), then these 15 components would all coincide.}
\end{rem}

\begin{prop}\label{c-g62}
For all $\alpha>0$,
\[G(\alpha;2,10,4)=\{(E,V)|E\in B(2,10,4), V\subset H^0(E), \dim V=4\}\]
and
\[\widetilde{G}(\alpha;2,10,4)=\{[(E,V)]|[E]\in \widetilde{B}(2,10,4), V\subset H^0(E), \dim V=4\}.\]
\end{prop}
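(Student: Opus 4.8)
The plan is to show that for any coherent system $(E,V)$ of type $(2,10,4)$, the notions of $\alpha$-stability and $\alpha$-semistability are independent of $\alpha>0$ and are governed entirely by the underlying bundle $E$ and the fact that $\dim V=4$. The key point is that, since $h^0(E)\le 5$ by \eqref{eqc2}, a $4$-dimensional subspace $V$ is almost all of $H^0(E)$, so the relevant coherent subsystems are very constrained.

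\begin{proof}
Let $(E,V)$ be a coherent system of type $(2,10,4)$. Suppose first that $E$ is not semistable, so that $E$ has a line subbundle $L$ with $d_L\ge 6$; then $d_{E/L}\le4$, so $h^0(E/L)\le2$ (if $E/L$ were to have $h^0\ge3$ with $d\le 4$ it would violate $\Cliff_1(C)=2$, and in any case $h^0\le d-1+1=d$ only when $L$ is special, but $h^0(E/L)\le 2$ follows directly since a line bundle of degree $\le 4$ on a general genus-$6$ curve has $h^0\le 2$). Hence $\dim(V\cap H^0(L))\ge 4-2=2$, and the coherent subsystem $(L,V\cap H^0(L))$ has $\alpha$-slope $d_L+\alpha\dim(V\cap H^0(L))\ge 6+2\alpha > 5+2\alpha\ge\mu_\alpha(E,V)$ for all $\alpha>0$. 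So $(E,V)$ is not $\alpha$-semistable for any $\alpha>0$; this shows that membership in $\widetilde{G}(\alpha;2,10,4)$ forces $E$ to be semistable, hence $[E]\in\widetilde B(2,10,4)$, and membership in $G(\alpha;2,10,4)$ likewise requires, as I check next, that $E$ be stable with $[E]\in B(2,10,4)$.

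Now suppose $E$ is semistable. The only coherent subsystems $(E',V')$ that could destabilize $(E,V)$ have $E'$ a line subbundle $L$ of $E$ with $d_L\le5$ (or $d_L=5$ in the strictly semistable case). If $d_L\le 4$ then $h^0(L)\le 2$, so $\dim(V\cap H^0(L))\le 2$ and the $\alpha$-slope of the subsystem is $\le 4+2\alpha < 5+2\alpha$ for all $\alpha>0$; if $d_L=5$ then $h^0(L)\le 2$ as well (a general genus-$6$ curve has gonality $4$, so $\Cliff(C)=2$ forces $h^0(L)\le\frac{1}{2}(d_L-2)+1$-type bounds, giving $h^0(L)\le 2$ here), so the $\alpha$-slope is $\le 5+2\alpha=\mu_\alpha(E,V)$, with equality only when $\dim(V\cap H^0(L))=2$, which is exactly the (semi)stable/strictly-(semi)stable boundary case. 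Thus if $E$ is stable, every proper subsystem has $\alpha$-slope strictly less than $\mu_\alpha(E,V)$, so $(E,V)$ is $\alpha$-stable for every $\alpha>0$; and if $E$ is strictly semistable, $(E,V)$ is $\alpha$-semistable (but never $\alpha$-stable) for every $\alpha>0$. Combining with the previous paragraph gives both displayed equalities, for $G$ using the stable bundles and for $\widetilde G$ the S-equivalence classes of semistable bundles, noting that two coherent systems with semistable underlying bundles are S-equivalent as coherent systems precisely when the bundles are S-equivalent (the subspace data contributes nothing to the $\gr$ since the destabilizing line subbundles carry at most the ``expected'' number of sections).

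The main obstacle is the bookkeeping in the strictly semistable case: one must verify that a line subbundle $L$ of degree exactly $5$ indeed has $h^0(L)\le 2$, so that the subsystem $(L,V\cap H^0(L))$ has $\alpha$-slope at most, and not exceeding, that of $(E,V)$; this uses that $C$ is a general curve of genus $6$, hence has gonality $4$ and Clifford index $2$, so no line bundle of degree $5$ can have $h^0\ge 3$. Once this is in hand, the independence of $\alpha$ is immediate because every comparison of $\alpha$-slopes reduces to comparing $d_L+\alpha\dim(V\cap H^0(L))$ with $5+2\alpha$ where $d_L\le 5$ and $\dim(V\cap H^0(L))\le 2$, and the coefficient of $\alpha$ on the left never exceeds $2$.
\end{proof}
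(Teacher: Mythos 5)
Your argument is correct and follows essentially the same route as the paper, which proves this by the method of Corollary \ref{c-g61}: the whole content is that on a general genus-$6$ curve every line bundle of degree $\le 5$ has $h^0\le 2$, so any subpencil-type subsystem $(L,V\cap H^0(L))$ has $\alpha$-slope at most $5+2\alpha$ with the coefficient of $\alpha$ never exceeding $2$, while an unstable $E$ forces $\dim(V\cap H^0(L))\ge 2$ via $h^0(E/L)\le 2$. Your write-up just makes explicit the bookkeeping the paper leaves to the reader.
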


\begin{proof}
The proof is similar to that of Corollary \ref{c-g61}, the key point being that any line bundle $L$ with $d_L\le5$ has $h^0\le2$.
\end{proof}

\section{Genus 5}\label{sec-g5}

Let $C$ be a general curve of genus $5$. Since $\Cliff_2(C)=2$, it follows that $h^0(E)\le4$ for any semistable bundle of rank $2$ and degree $\le2g-2=8$. Moreover the bundles which compute $\Cliff_2(C)$ are precisely the semistable bundles of rank $2$ and degree $8$ with $h^0=4$. Note that
\[\beta(2,8,4)=1,\ \beta(2,K_C,4)=2.\] 

\begin{prop}\label{p-g51}
Let $C$ be a general curve of genus $5$. Then $B(2,K_C,4)$ is smooth of dimension $2$ and consists of the stable bundles with $h^0(E)=4$ which can be expressed in the form
\begin{equation}\label{eq-g51}
0\to M\to E\to M^*\otimes K_C\to 0, d_M=2, h^0(M)=1,
\end{equation}
with all sections of $K_C\otimes M^*$ lifting to $E$. Moreover, $B(2,K_C,4)$ is irreducible and
\begin{equation}\label{eq-g52}
\widetilde{B}(2,8,4)=\overline{B(2,K_C,4)}\cup \{[L\oplus L']|L,L'\in B(1,4,2)\}.
\end{equation}
In particular, $B(2,8,4)=B(2,K_C,4)$.
\end{prop}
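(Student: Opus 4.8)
The plan is to analyse a general semistable bundle $E$ of rank $2$, degree $8$, with $h^0(E)=4$, i.e.\ a bundle computing $\Cliff_2(C)$, and to extract from it a subpencil. Since $g=5$ is odd, $\Cliff_2(C)=\Cliff(C)=2$ and $h^0(E)=4=s+2$ with $s=2$. By Lemma \ref{l-p1}, if $E$ admits no subpencil then $h^0(\det E)\ge 2s+1=5=g$, which forces $\det E\simeq K_C$; but in that case one can apply the known results on $B(2,K_C,4)$ directly. If $E$ does admit a subpencil $L$ with $d_L=\operatorname{gon}(C)=4$ and $h^0(L)=2$, then since $\operatorname{gon}(C)=4$ is the minimal possibility, one gets an exact sequence $0\to L\to E\to L'\to 0$ with $d_{L'}=4$. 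The first step is therefore a case analysis: either $\det E\simeq K_C$, or $E$ fits into such a sequence with a degree-$4$ quotient. In the latter case, I would argue as in the proof of Theorem \ref{p-g6k5} (tensoring by $L$, base-point-free pencil trick, the bound $h^0(E\otimes L^*)\le 1$ from \cite[Theorem B]{bgn}) to compute $h^0(L'\otimes L)=h^0(\det E)$ and conclude that either $\det E\simeq K_C$ or $L'$ is a subpencil too, in which case $E$ is S-equivalent to a direct sum $L\oplus L'$ of two gonality pencils, i.e.\ elements of $B(1,4,2)$. This would establish the set-theoretic equality \eqref{eq-g52} up to checking that both pieces actually occur and that the union is as claimed.

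Next I would pin down the structure of $B(2,K_C,4)$ itself. By \cite{te2} the symmetrized Petri map is injective for stable $E$ on a general curve, so $B(2,K_C,4)$ is smooth of dimension $\beta(2,K_C,4)=2$ at every point where $h^0(E)=4$; non-emptiness for $g=5$ is in \cite{bf}. To get the description \eqref{eq-g51}, I would again produce a subpencil: a bundle $E$ with $\det E\simeq K_C$ and $h^0(E)\ge 4$ must admit either a subpencil $L$ of degree $4$ (the case handled above, now with $L'\simeq L^*\otimes K_C$, forcing $L,L^*\otimes K_C\in B(1,4,2)$ — but then $L\oplus L^*\otimes K_C$ would be strictly semistable, so this is the boundary, not $B(2,K_C,4)$), or, alternatively, the relevant subsheaf has degree $2$. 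Here the right move is to use that for $g=5$, $h^0$ of a line bundle of degree $\le 3$ is at most $1$, whereas a general section of $E$ vanishes at finitely many points; saturating the image of a pencil inside $V=H^0(E)$ against a line subbundle leads to $0\to M\to E\to M^*\otimes K_C\to 0$ with $d_M=2$, $h^0(M)=1$, $h^0(M^*\otimes K_C)=3$, and all three sections lifting. Riemann–Roch gives $h^0(M)+h^0(M^*\otimes K_C)=1+3=4=h^0(E)$, so the lifting is automatic, exactly as in Corollary \ref{c-p1}.

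For irreducibility of $B(2,K_C,4)$ I would run the extension count of Lemma \ref{l-p3} in the form \eqref{eq-g51}: the line bundles $M$ of degree $2$ with $h^0(M)=1$ (equivalently, effective divisors of degree $2$) form an irreducible family of dimension $2$, and for each such $M$ the multiplication map $H^0(M^*\otimes K_C)\otimes H^0(M^*\otimes K_C)\to H^0((M^*\otimes K_C)^{\otimes 2})$, dualised, controls the extensions with full lifting; a dimension count (LHS has dimension $3$, RHS has dimension $h^0(K_C^{\otimes 2}\otimes M^{*\otimes 2})$ which by Riemann–Roch is $3g-3-2\cdot 2=8$ — wait, $K_C^{\otimes 2}\otimes M^{*\otimes 2}$ has degree $2(2g-2)-4=12$, so $h^0=12-5+1=8$, so the cokernel is large) shows that for generic $M$ the space of such extensions is nonempty and of the expected dimension, and stability of the generic $E$ follows because a destabilising line subbundle would have degree $\ge 5$ hence $h^0\le 2$, while $h^0(E)=4$ and the quotient then has degree $\le 3$ with $h^0\le 1$, a contradiction. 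Assembling: the total space of pairs $(M,E)$ has dimension $2+(\text{fibre dim})$, and comparing with $\beta(2,K_C,4)=2$ (using that the map $(M,E)\mapsto E$ is generically finite, since $M$ is recovered as the maximal subpencil-free subsheaf) gives irreducibility. Finally $B(2,8,4)=B(2,K_C,4)$ follows from \eqref{eq-g52} once one observes that the strictly semistable locus $\{[L\oplus L']\}$ contributes no stable bundles, and that any stable $E\in B(2,8,4)$ with $\det E\not\simeq K_C$ would, by the computation in the first step, have to be strictly semistable — contradiction.

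The main obstacle I expect is the dimension/irreducibility bookkeeping in the last paragraph: one must verify that the map from the parameter space of extensions \eqref{eq-g51} to $B(2,K_C,4)$ is dominant with irreducible source of the correct dimension, which requires care that the generic extension is stable \emph{and} has $h^0$ exactly $4$ (not more), and that $M$ is canonically determined by $E$ so the map is generically finite. The cleanest route may be to avoid the extension count altogether and instead quote \cite{bf} for non-emptiness together with \cite{te2} for smoothness of dimension $2$, then prove irreducibility by showing $B(2,K_C,4)$ is the image of the irreducible incidence variety $\{(M,E)\}$ and is pure of dimension $2$, so that connectedness plus smoothness forces irreducibility; here the potential gap is ruling out that $B(2,K_C,4)$ has a second component meeting $B(2,K_C,5)$, which one handles by noting $B(2,K_C,5)=\emptyset$ for general $C$ of genus $5$ (since $\beta(2,K_C,5)=3g-3-15=-3<0$ and the symmetrized Petri argument applies).
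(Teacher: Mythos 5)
Your overall strategy (non-emptiness and smoothness from \cite{bf} and \cite{te2}; the structural description via extensions \eqref{eq-g51}; irreducibility by exhibiting $B(2,K_C,4)$ as the image of an irreducible family of extensions over $B(1,2,1)$; and \eqref{eq-g52} via a subpencil analysis) matches the paper's, but the execution has concrete gaps at the two decisive points. First, the derivation of \eqref{eq-g51} is not actually carried out: ``saturating the image of a pencil inside $V$'' is not an argument, and indeed $E\in B(2,K_C,4)$ admits \emph{no} subpencil (a line subbundle of a stable $E$ with $\mu(E)=4$ has degree $\le3$, and $B(1,3,2)=\emptyset$ on a general curve of genus $5$); the paper gets the exact sequence with $d_M=2$, $h^0(M)=1$ from this observation together with \cite[Lemma 5.6]{ln3}. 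Your sketch does not explain why the subsheaf has degree exactly $2$ and a section.

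Second, and more seriously, your treatment of the multiplication map $m\colon H^0(M^*\otimes K_C)\otimes H^0(M^*\otimes K_C)\to H^0(M^{*2}\otimes K_C^2)$ miscounts: the source has dimension $3\cdot3=9$, not $3$, against a target of dimension $8$, so non-surjectivity does \emph{not} follow from dimensions and the cokernel is not ``large''. The paper's argument here is essential: $m$ factors through $\operatorname{Sym}^2$, so $\dim\Ker m\ge3$; identifying $\Ker m\simeq H^0(F^*)$, where $F$ is the (stable) kernel of the evaluation map of $M^*\otimes K_C$, and invoking $\Cliff_2(C)=2$ gives $\dim\Ker m\le3$, whence $\dim\Coker m=2$ for \emph{every} $M\in B(1,2,1)$. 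This constancy is what makes the extension space a $\mathbb{P}^1$-fibration $W$ over the irreducible surface $B(1,2,1)$, hence irreducible; without it your incidence variety could a priori acquire extra components over jump loci. Finally, your claim that $(M,E)\mapsto E$ is generically finite is false: $\dim W=3$ maps onto the $2$-dimensional $B(2,K_C,4)$, so the fibres are generically $1$-dimensional. Irreducibility still follows simply because the image of an irreducible variety is irreducible (surjectivity being guaranteed by the structural description), but the ``generically finite'' bookkeeping you propose would not close the argument. For \eqref{eq-g52} the paper simply cites \cite[Proposition 5.7]{ln3}; your subpencil case analysis is in the right spirit but would need the same care about which degrees of subpencils are actually possible under semistability.
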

\begin{proof}
The fact that $B(2,K_C,4)$ is smooth of dimension $2$ follows from \cite{bf} and \cite{te2}. A bundle $E\in B(2,K_C,4)$ cannot contain a subpencil since $B(1,3,2)=\emptyset$; it follows from \cite[Lemma 5.6]{ln3} that $E$ can be expressed in the form \eqref{eq-g51}. Now consider the multiplication map
\[m: H^0(M^*\otimes K_C)\otimes H^0(M^*\otimes K_C)\lra H^0(M^{*2}\otimes K_C^2).\]
This factors through $S^2H^0(M^*\otimes K_C)$, so $\dim(\Ker m)\ge3$. Now  $M^*\otimes K_C$ is generated and the kernel $F$ of its evaluation map has rank $2$. If $L$ is a quotient line bundle of $F^*$, then $L$ is generated and $h^0(L^*)=0$, so $d_L \ge4$; since $d_{F^*}=6$, this proves that $F$ is stable. Moreover $\Ker m\simeq H^0(F\otimes M^*\otimes K_C)\simeq H^0(F^*)$. Since $\Cliff_2(C)=2$, it follows that $\dim(\Ker m)\le3$. Hence $\dim(\Ker m)=3$ and $\dim(\Coker m)=2$. It follows from Lemma \ref{l-p3} that the isomorphism classes of non-trivial extensions \eqref{eq-g51}, for which all sections of $M^*\otimes K_C$ lift, form a $\mathbb{P}^1$-fibration $W$ over $B(1,2,1)$. Moreover, $W$ is irreducible and the open subset for which $E$ is stable maps surjectively to $B(2,K_C,4)$, which is therefore irreducible. For \eqref{eq-g52}, see \cite[Proposition 5.7]{ln3}.
\end{proof}

\begin{rem}\label{r-g51}
\emph{Mukai states that $\overline{B(2,K_C,4)}\simeq {\mathbb P}^2$ (see the table in \cite[section 4]{mu3}). Since $\beta(1,4,2)=1$, it follows from \eqref{eq-g52} that all components of $\widetilde{B}(2,8,4)$ have dimension $2$. In particular, $\widetilde{B}(2,8,4)$ has no component of dimension $\beta(2,8,4)$. This does not contradict \cite{te} since $\beta(1,4,2)=1$. Moreover $\overline{B(2,8,4)}\ne\widetilde{B}(2,8,4)$.}
\end{rem}

\begin{prop}\label{c-g51}
For all $\alpha>0$,
\[G(\alpha;2,8,4)=\{(E,H^0(E))|E\in B(2,8,4)\}\]
and
\[\widetilde{G}(\alpha;2,8,4)=\{[(E,H^0(E))]|[E]\in \widetilde{B}(2,8,4)\}.\]
\end{prop}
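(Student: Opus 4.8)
The plan is to argue exactly as in the proof of Corollary~\ref{c-g61} (and Proposition~\ref{c-g62}), using three facts valid for a general curve $C$ of genus $5$: (a) since $\operatorname{gon}(C)=4$, every line bundle of degree $\le 3$ has $h^0\le1$; (b) since $\beta(1,4,3)=-4<0$, every line bundle of degree $4$ has $h^0\le2$; (c) since $\Cliff_2(C)=2$, every semistable bundle of rank $2$ and degree $8$ has $h^0\le4$. Note that a coherent system $(E,V)$ of type $(2,8,4)$ has $\mu_\alpha(E,V)=4+2\alpha$, and that in testing $\alpha$-(semi)stability it suffices to consider saturated subsheaves, since replacing a subsystem $(E',V')$ by $(\bar{E'},V\cap H^0(\bar{E'}))$ does not decrease $\mu_\alpha$.

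For the inclusion $\supseteq$: if $E\in B(2,8,4)=B(2,K_C,4)$ then $h^0(E)=4$ by (c), and for $V:=H^0(E)$ any proper saturated subsystem is given by a line subbundle $L$ with $d_L\le3$ (by stability of $E$), so $\dim(V\cap H^0(L))\le h^0(L)\le1$ by (a) and $\mu_\alpha(L,V\cap H^0(L))\le3+\alpha<4+2\alpha$; hence $(E,H^0(E))\in G(\alpha;2,8,4)$ for all $\alpha>0$. For $\widetilde G$, a class $[E]\in\widetilde B(2,8,4)$ is represented either by such a stable bundle or, by \eqref{eq-g52} together with $B(1,4,3)=\emptyset$, by $\gr E=L\oplus L'$ with $L,L'\in B(1,4,2)$; in the latter case $(\gr E,H^0(\gr E))$ is $\alpha$-semistable with associated graded $(L,H^0(L))\oplus(L',H^0(L'))$, so it defines a point of $\widetilde G(\alpha;2,8,4)$.

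For the inclusion $\subseteq$: let $(E,V)\in G(\alpha;2,8,4)$. If $E$ were not stable it would contain a line subbundle $L$ with $d_L\ge4$; then $\deg(E/L)\le4$, so $h^0(E/L)\le2$ by (a)--(b), forcing $\dim(V\cap H^0(L))\ge 4-2=2$ and $\mu_\alpha(L,V\cap H^0(L))\ge4+2\alpha=\mu_\alpha(E,V)$, which contradicts $\alpha$-stability. So $E$ is stable; then $h^0(E)\le4$ by (c), hence $V=H^0(E)$ and $E\in B(2,8,4)$. For $\widetilde G$, if $(E,V)$ is $\alpha$-semistable but $E$ is not semistable, there is a line subbundle $L$ with $d_L\ge5$, so $\deg(E/L)\le3$ gives $h^0(E/L)\le1$ by (a), whence $\dim(V\cap H^0(L))\ge3$ and $\mu_\alpha(L,V\cap H^0(L))\ge5+3\alpha>4+2\alpha$, contradicting $\alpha$-semistability; so $E$ is semistable, and applying (c) to $\gr E$ together with $h^0(\gr E)\ge h^0(E)\ge\dim V=4$ gives $h^0(\gr E)=h^0(E)=4$, so $V=H^0(E)$, $[E]\in\widetilde B(2,8,4)$, and the S-equivalence class of $(E,V)$ in $\widetilde G(\alpha;2,8,4)$ coincides with that of $(\gr E,H^0(\gr E))$.

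None of this is difficult; the only step requiring a little care is the bookkeeping for $\widetilde G$ — verifying that every strictly semistable class in $\widetilde B(2,8,4)$ is of the form $[L\oplus L']$ with $L,L'\in B(1,4,2)$ (which follows from Proposition~\ref{p-g51} and the vanishing of $B(1,4,3)$), and that, for $E$ strictly semistable, the associated graded coherent system of $(E,H^0(E))$ is $(\gr E,H^0(\gr E))$, so that the two sides match as sets of S-equivalence classes.
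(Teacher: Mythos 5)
Your proposal is correct and is essentially the argument the paper intends: the paper's proof consists only of the remark that it is ``similar to that of Corollary \ref{c-g61}'', and your write-up is precisely that argument carried out in detail for genus $5$, using the gonality bound, the vanishing of $B(1,4,3)$, and $\Cliff_2(C)=2$, together with the extra bookkeeping for the strictly semistable classes via \eqref{eq-g52}. No gaps.
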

\begin{proof} The proof is similar to that of Corollary \ref{c-g61}.
\end{proof}

\section{Bundles computing the Clifford index}\label{sec-cliff}

By \cite[Proposition 11]{bfa}, the bundles computing $\Cliff_2(C)$ on a general curve of genus $g$ have either $h^0=4$ or degree $2g-2$ and $h^0=2+\left\lceil\frac{g-1}2\right\rceil$. This substantially improves \cite[Theorem 7.4]{ln3}. Bakker and Farkas prove further that the second possibility does not arise when $g$ is even and $g\ge10$ \cite[Theorem 4]{bfa} and conjecture that the same is true for $g$ odd, $g\ge15$. In fact, for $g$ odd, $g\ge15$, $\widetilde{B}(2,K_C,\frac{g+3}2)=\emptyset$ and all $E\in B(2,2g-2,\frac{g+3}2)$ can be expressed in the form \eqref{eq-p1} with $L\not\simeq L'$ \cite[Remark 13]{bfa}, but it is not known whether any such exist. 

For $g\le5$, there are no semistable bundles of degree $\le2g-2$ with $h^0>4$, while, for $g=6$, we have already answered the question in Remark \ref{r-g61}. It remains to consider $g=7,8,9,11,13$.

\begin{ex}\label{ex1}
\emph{Let $C$ be a general curve of genus $7$. Then $\Cliff_2(C)=3$ and we have
\[\beta(2,12,5)=0,\ \beta(2,K_C,5)=3.\]
 It is stated but not formally proved in \cite{bf} that $B(2,K_C,5)$ is non-empty. This is proved in \cite{te1} and \cite[Proposition 7.7]{ln3}. The more precise statement that $B(2,K_C,5)$ is a Fano $3$-fold of Picard number $1$ and genus $7$ is \cite[Theorem 8.1]{mu3}(see also \cite[Theorem 4.13]{mu1}); this holds for all non-tetragonal curves of genus $7$. By classical Brill-Noether theory, $B(1,6,3)=\emptyset$, so $\widetilde{B}(2,12,5)=B(2,12,5)$. Moreover, by Corollary \ref{c-p1}, any bundle $E\in B(2,12,5)\setminus B(2,K_C,5)$ can be expressed in the form \eqref{eq-p1} with $L,L'\in B(1,5,2)$, $L\not\simeq L'$. It is easy to see that any bundle given by a non-trivial extension \eqref{eq-p1}, for which all sections of $L'^*\otimes K_C$ lift, is stable, but it is not known whether such extensions exist.}
\end{ex}

\begin{ex}\label{ex2}
\emph{Let $C$ be a general curve of genus $8$. Then $\Cliff_2(C)=3$ and we have
\[\beta(2,14,6)=-7,\ \beta(2,K_C,6)=0.\]
Again, it is stated in \cite{bf} and proved in \cite[Proposition 7.2]{ln3} that $B(2,K_C,6)$ is non-empty.   By classical Brill-Noether theory, $B(1,7,3)=\emptyset$, so $\widetilde{B}(2,14,6)=B(2,14,6)$. Furthermore, $B(2,K_C,6)$ is finite by \cite{te2} and consists of a single point by \cite{v} (see also \cite[Theorem 4.14]{mu1}, where the corresponding stable bundle is described). Finally, $B(2,14,6)=B(2,K_C,6)$ by \cite[Proposition 11]{bfa}.}
\end{ex}

\begin{ex}\label{ex3}
\emph{Let $C$ be a general curve of genus $9$. Then $\Cliff_2(C)=4$ and we have
\[\beta(2,16,6)=-3,\ \beta(2,K_C,6)=3.\]
 It is stated in \cite{bf} and proved in \cite{te1} and  \cite[Proposition 7.8]{ln3} that $B(2,K_C,6)$ is non-empty. Moreover (see \cite[Theorem 7.4(1)]{ln3}), there exist strictly semistable bundles $Q\oplus Q'$ of degree $16$ with $h^0=6$. In fact, since there are just $42$ line bundles of degree $8$ with $h^0=3$, there are $21$ points of $\widetilde{B}(2,K_C,6)$ corresponding to strictly semistable bundles. In fact, Mukai \cite{mu1} asserts that $\widetilde{B}(2,K_C,6)$ is a quartic $3$-fold in $\mathbb{P}^4$ with $21$ singular points. Finally, $\widetilde{B}(2,16,6)=\widetilde{B}(2,K_C,6)$ by \cite[Proposition 11]{bfa}.}
\end{ex}

\begin{ex}\label{ex4}
\emph{Let $C$ be a general curve of genus $11$. Then $\Cliff_2(C)=5$ and we have
\[\beta(2,20,7)=-8,\ \beta(2,K_C,7)=2.\]
 It is stated in \cite{bf} and proved in \cite[Remark 7.5]{ln3} that $\widetilde{B}(2,K_C,7)$ is non-empty. More precisely, Mukai \cite[Theorem 1]{mu2} shows that $\widetilde{B}(2,K_C,7)$ is a smooth K3 surface of genus $11$. By classical Brill-Noether theory, $B(1,10,4)=\emptyset$, so $\widetilde{B}(2,20,7)=B(2, 20,7)$. Finally, $B(2,20,7)=B(2,K_C,7)$ by \cite[Proposition 11]{bfa}.}
\end{ex}

\begin{ex}\label{ex5}
\emph{Let $C$ be a general curve of genus $13$. Then $\Cliff_2(C)=6$ and we have
\[\beta(2,24,8)=-15,\ \beta(2,K_C,8)=0.\]
By classical Brill-Noether theory, $B(1,12,4)=\emptyset$, so $\widetilde{B}(2,24,8)=B(2,24,8)$, but questions of existence are not clear. In fact, $g=13$, $k=8$ is the first case in which the existence part of the Bertram-Feinberg-Mukai conjecture
\[\beta(2,K_C,k)\ge0\stackrel{?}{\Longrightarrow} B(2,K_C,k)\ne\emptyset\]
is unresolved. The arguments of \cite{bf,lnp,mu3,te1,v,zh} all fail, although \cite[Theorem 3.5]{lnp} (see also \cite{mu3}) does reduce the problem to a purely combinatorial one. Moreover, by \cite[Proposition 4.3]{v} and Lemma \ref{l-p3}, if $E\in B(2,K_C,8)$, then $E$ does not admit a subpencil and is expressible in the form 
\[0\lra L\lra E\lra L^*\otimes K_C\lra0\]
with $d_L\ge 6$ by \cite{ms}. We must have $h^0(L)\le1$, so
\[h^0(E)\le h^0(L)+h^0(L^*\otimes K_C)\le 1+(13-d_L)\le8.\]
The only way to achieve equality is with $d_L=6$ and $h^0(L)=1$ and then all sections of $L^*\otimes K_C$ must lift to $E$. On the other hand, if $E\in B(2,24,8)\setminus B(2,K_C,8)$, then $E$ is expressible in the form \eqref{eq-p1} with $L,L'\in B(1,8,2)$, $L\not\simeq L'$. It is not clear whether there are any $L$, $L'$ for which there exist non-trivial extensions of this form for which all sections of $L'^*\otimes K_C$ lift, but, if these do exist, $E$ is necessarily stable. Note that, in this case, \cite[Proposition 11]{bfa} does not apply since the general curve of genus $13$ cannot be embedded in a K3 surface.}
\end{ex}

\end{document}